\theoremstyle{plain}
\newtheorem{thm}{Theorem}[section]
\newtheorem{prop}[thm]{Proposition}
\newtheorem{lemma}[thm]{Lemma}
\theoremstyle{definition}
\newtheorem{defn}[thm]{Definition}
\newtheorem*{defn*}{Definition}
\newtheorem{question*}{Question}
\newtheorem*{example*}{Example}
\newtheorem{rmk}[thm]{Remark}
\newtheorem*{rmk*}{Remark}
\newcommand{\field}[1]{\mathbbm{#1}}
\newcommand{\N}{\field{N}}
\newcommand{\F}{\field{F}}
\newcommand{\ideal}[1]{\mathfrak{#1}}
\newcommand{\m}{\ideal{m}}
\newcommand{\p}{\ideal{p}}
\newcommand{\q}{\ideal{q}}
\newcommand{\func}[1]{\mathrm{#1} \,}
\newcommand{\Spec}{\func{Spec}}
\newcommand{\arrow}[1]{\stackrel{#1}{\rightarrow}}
\newcommand{\ra}{\rightarrow}
\DeclareMathOperator{\ann}{ann}
\newcommand{\cM}{\mathcal{M}}
\newcommand{\cL}{\mathcal{L}}
\DeclareMathOperator{\Supp}{Supp}
\DeclareMathOperator{\Prl}{Prl}
\DeclareMathOperator{\md}{mod}
\DeclareMathOperator{\Mod}{Mod}
\DeclareMathOperator{\diam}{diam}
\DeclareMathOperator{\gir}{girth}
\DeclareMathOperator{\clq}{clique}
\DeclareMathOperator{\chro}{\chi}
\newcommand{\Max}[1]{\Omega(#1)}
\DeclareMathOperator{\USpec}{Alex}
\DeclareMathOperator{\NAK}{NAK}
\DeclareMathOperator{\Cyc}{Cyc}
\DeclareMathOperator{\Id}{Id}
\DeclareMathOperator{\FId}{FId}
\newcommand{\ver}[1]{\mathrm{v}({#1})}
\DeclareMathOperator{\Jac}{Jac}
\newcommand{\AG}{{\mathbb{AG}}}
\DeclareMathOperator{\CId}{\omega-Id}
\author{Neil Epstein}
\address{Universit\"at Osnabr\"uck \\ 
Institut f\"ur Mathematik \\ 
49069 Osnabr\"uck \\ Germany}
 \email{nepstein@uni-osnabrueck.de}
\author{Peyman Nasehpour}
\address{Department of Engineering Science\\
Faculty of Engineering\\
University of Tehran\\
Tehran, Iran}
\email{nasehpour@gmail.com}
\title{Zero-divisor graphs of nilpotent-free semigroups}
 \keywords{zero-divisor graph, nilpotent-free semigroup, pearled topological space, prime spectrum, maximal spectrum, tensor product, Armendariz map, locally Nakayama module, Alexandroff space, content of polynomials, diameter, girth, clique number, chromatic number, reduced ring, annihilating-ideal graph, atomic bounded distributive lattice, comaximal graph}
\subjclass[2010]{Primary 05C25, 13A99, 20M14; Secondary 05C12, 05C15, 05E40, 06D99, 13A15, 13B25, 20M15, 54B35, 54D99}
\date{May 21, 2012}
\begin{document}
\begin{abstract}
 We find strong relationships between the zero-divisor graphs of apparently disparate kinds of nilpotent-free semigroups by introducing the notion of an \emph{Armendariz map} between such semigroups, which preserves many graph-theoretic invariants.  We use it to give relationships between the zero-divisor graph of a ring, a polynomial ring, and the annihilating-ideal graph.  Then we give relationships between the zero-divisor graphs of certain topological spaces (so-called pearled spaces), prime spectra, maximal spectra, tensor-product semigroups, and the semigroup of ideals under addition, obtaining surprisingly strong structure theorems relating ring-theoretic and topological properties to graph-theoretic invariants of the corresponding graphs.
\end{abstract}
 \thanks{The first named author was partially supported by a grant from the DFG (German Research Foundation)}

\maketitle
\tableofcontents

\section{Introduction}

The original motivation for this work was the following: For a commutative ring $R$ with unity, let $({}_R\md, \otimes_R)$ be the semigroup of isomorphism classes of finite $R$-modules, with semigroup operation given by tensor product.  What does the zero-divisor graph of this semigroup look like, from the point of view of the ring-theoretic properties of $R$?  In investigating this question, we encountered connections with  topology, semigroup theory, lattice theory, multiplicative ideal theory, and other areas.

\begin{defn}
We define a (multiplicatively written) semigroup $(S,\cdot)$ to be a \emph{semigroup with zero} if it is commutative and has an absorbing element $0$.  We say $S$ is  \emph{nilpotent-free} if for any element $x\in S$ such that $x^n=0$ for some positive integer $n$, it follows that $x=0$.
\end{defn}

A version of the notion of zero-divisor graph of a \emph{ring} was first given by Beck \cite{Be-color}, with the by-now standard definition given by Anderson and Livingston \cite{AnLi-zd} some years later.  However, it is the more general notion of the zero-divisor graph of a \emph{semigroup with zero} that will be basic to the present paper.

\begin{defn}\cite{DMS-zdsemi}
Let $(S,\cdot)$ be a semigroup with zero.  Then the \emph{zero-divisor graph} $\Gamma(S)=\Gamma(S,\cdot)$ of $S$ is the graph whose vertex set $\ver{\Gamma(S)}$ is given by all the nonzero zero-divisors of $S$, such that $\{s,t\}$ is an edge precisely when $s\cdot t=0$.
\end{defn}

Then the zero-divisor graph of a ring $R$ from \cite{AnLi-zd} is $\Gamma(R,\cdot)$.

In our investigation, we found the notion of \emph{Armendariz map} (see Definition~\ref{def:Arm}) to be central.  It is a kind of generalization of the notion of isomorphism, preserving many invariants of the corresponding zero-divisor graphs, which nonetheless leaves quite a bit of room for relating different sorts of semigroups.  In particular, for any commutative ring $R$, we have the following commutative diagrams, where all the horizontal arrows are Armendariz maps.

\[\xymatrix{
&({}_R\NAK, \otimes_R) \ar[r]^{\Supp} & (\sigma(\USpec R), \cap) \ar[r]^{\cap \Max R} & (2^{\Max R}, \cap)\\
&({}_R\md, \otimes_R) \ar[r]^{\Supp} \ar@{^(->}[u] & (\sigma(\Spec R), \cap)\ar@{^(->}[u] \ar[r]^{\cap \Max R} & (\sigma(\Max R), \cap)\ar@{^(->}[u]\\
(\Id{R}, +) \ar[r]^{\cong} &({}_R\Cyc, \otimes_R) \ar[r]^{\Supp}  \ar@{^(->}[u] &  (\sigma(\Spec R), \cap) \ar@{=}[u] \ar[r]^{\cap \Max R} & (\sigma(\Max R), \cap)\ar@{=}[u]
}\]
and
\[\xymatrix{
(R[\![X]\!], \cdot) \ar[r]^{c'} & (\CId{R}, \cdot)\\
(R[X], \cdot) \ar@{^(->}[u] \ar[r]^c & (\FId{R}, \cdot)\ar@{^(->}[u]
}\]

Some familiar objects in these diagrams include \begin{itemize}
\item $({}_R\md, \otimes_R)$, whose elements are the isomorphism classes of the finitely generated $R$-modules,
\item $\sigma(\Spec R)$, the Zariski-closed subsets of the prime spectrum,
\item $\sigma(\Max{R})$, the Zariski-closed subsets of the maximal spectrum,
\item $\FId{R}$ (resp. $\CId{R}$, resp. $\Id{R}$), the finitely generated (resp. countably generated, resp. arbitrary) ideals of $R$,
\item $\Supp$, the support of a module,
\item $c$, the \emph{content map}, which sends each polynomial to the ideal generated by its coefficients. 
\end{itemize}
 
Here is a brief sketch of the contents of our paper:

In \S\ref{sec:Arm}, we introduce the unifying concept of our paper (the \emph{Armendariz map}) along with the theorem that shows that many graph-theoretic invariants are preserved or at least controlled by such maps.  In \S\ref{sec:eq}, we show how this concept is related to the \emph{equivalence class} graph of a ring that has been investigated by several authors.  In \S\ref{sec:content}, we give the first examples of Armendariz maps, between polynomial (resp. power series) extensions and the semigroup of finitely- (resp. countably-) generated ideals.  In the process, we provide an alternate proof of a conjecture of Behboodi and Rakeei.  In \S\ref{sec:lattice}, we consider among other things the semigroup whose elements are the closed subsets of a $T_1$ topological space $Y$, with product given by intersection, showing intimate connections between familiar properties of the space $Y$ with graph-theoretic invariants of the corresponding zero-divisor graph.  In \S\ref{sec:prl}, we explore the properties of so-called \emph{pearled} topological spaces, each of which may be associated via an Armendariz map with a $T_1$ space.  In \S\ref{sec:spec}, we look at two topologies on the prime ideal spectrum of a ring and show that certain properties (e.g. the number of maximal ideals and the primality of the Jacobson radical) may be detected by graph-theoretic invariants of the corresponding zero-divisor graphs.  Using yet another Armendariz map in \S\ref{sec:tensor}, we answer our original question on the zero-divisor graph of finitely generated (resp \emph{locally Nakayama}) modules up to isomorphism under tensor product, showing clean connections between ring-theoretic properties (e.g. the number of maximal ideals, the primality of the Jacobson radical) with graph-theoretic invariants of such graphs.  Finally, in \S\ref{sec:comax} we compute graph-theoretic invariants of the semigroup of ideals under \emph{addition}, which is closely related to so-called comaximal graphs in the literature.

The moral of our story is that one need not study zero-divisor graphs of one particular kind of semigroup in isolation, as one may be able to obtain information about many other kinds of semigroups if one knows something about their relationships with one another.  In particular, even if one's primary field of interest is commutative ring theory (as is the case with the authors), the study of zero-divisor graphs only of commutative rings is an unnecessary restriction.

\section{The fundamental theorem on Armendariz maps}\label{sec:Arm}
In this section, we introduce our main tool for translating graph-theoretic information from one nilpotent-free semigroup to another:

\begin{defn}\label{def:Arm}
Let $g: S \ra T$ be a set map between nilpotent-free semigroups.  We say that $g$ is an \emph{Armendariz} map if it satisfies the following three conditions: \begin{enumerate}
\item $g$ is surjective.
\item For any $s\in S$, $s=0$ if and only if $g(s) = 0$.
\item For any $s, s' \in S$, we have $ss'= 0$ if and only if $g(s) g(s') = 0$.
\end{enumerate}
\end{defn}

\begin{rmk}
This definition may seem unnatural to some readers.  After all, shouldn't a map between semigroups be required to be a semigroup homomorphism?  In most examples, this seems to be true; accordingly, the reader should note that if $g$ is a homomorphism of nilpotent-free semigroups, it is an Armendariz map if and only if it is surjective and \emph{kernel-free} (i.e. $g(s) = 0 \implies s=0$).   In two important cases, namely those of \emph{(generalized) content}, we have an Armendariz map that is usually not a homomorphism (see e.g. Lemma~\ref{lem:Arm-not-Gauss}; content is Armendariz when the ring is reduced, but it is only a homomorphism when the ring is Gaussian) but nevertheless yields the properties we want.  Hence we operate at this level of generality.  However, the reader may spend most of the paper thinking of Armendariz maps as being surjective kernel-free homomorphisms.

This brings up another point.  Namely, if $S$ and $T$ are reduced \emph{rings}, then a kernel-free surjective ring homomorphism is just an isomorphism.  The idea of an Armendariz map is that it approximates the properties of an isomorphism closely enough that much of the structure reflected in a zero-divisor graph is preserved.
\end{rmk}

Recall the following: 
\begin{defn}\cite{DMS-zdsemi}
Let $S$ be a semigroup with zero.  Then the \emph{zero-divisor graph} $\Gamma(S)$  of $S$ is the simple graph whose vertices are the non-zero zero-divisors of $S$, such that two distinct such elements $a,b$ form an edge precisely when $ab=0$
\end{defn}

The invariants of graphs that we care about in this paper are the following:
\begin{defn}
For a graph $G$, the \emph{diameter} $\diam G$ is defined to be the supremum of the distances between any pair of vertices.  The \emph{girth} $\gir G$ is the length of the shortest cycle, or if there are no cycles, we say $\gir G = \infty$.  The \emph{clique number} $\clq G$ of $G$ is the supremum of the cardinalities of subsets $S$ of vertices of $G$ such that the induced subgraph of $G$ on $S$ is complete.  (Such subsets are called \emph{cliques}.) The \emph{chromatic number} $\chro(G)$ of $G$ is the infimum of the cardinalities of sets $A$ such that there exists a set map $c: \ver{G} \ra A$ such that whenever $v,w \in \ver{G}$ determine an edge, we have $c(v) \neq c(w)$.  (Such a map is called an \emph{$A$-coloring} of $G$.)
\end{defn}

Some important properties are collected below:
\begin{prop}
For any semigroup $S$ with zero, $\Gamma(S)$ is connected  \cite[Theorem 1.2]{DMS-zdsemi}, $\diam {\Gamma(S)} \leq 3$ \cite[Theorem 1.2]{DMS-zdsemi}, and $\gir{\Gamma(S)} \in \{3, 4, \infty\}$ \cite[Theorem 1.5]{DMS-zdsemi}.
\end{prop}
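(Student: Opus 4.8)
The three assertions are the semigroup analogues of the classical facts for zero-divisor graphs of rings, so the plan is to adapt the Anderson--Livingston style arguments to the purely multiplicative setting, where the absence of addition forces us to replace ``sums of annihilators'' by \emph{products} of annihilators.

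\emph{Connectedness and $\diam \Gamma(S) \le 3$.} Let $a, b$ be distinct vertices; thus $a, b$ are nonzero and there exist nonzero $x, y$ with $ax = 0 = by$. If $ab = 0$ we are done, so assume $ab \neq 0$. The key observation is that the element $z := xy$ satisfies $az = (ax)y = 0$ and $bz = (by)x = 0$ simultaneously. First I would treat the case $z \neq 0$: then $z$ is a nonzero zero-divisor, hence a vertex, and it is a common neighbor of $a$ and $b$, giving $d(a,b) \le 2$ --- unless $z$ collides with $a$ or $b$, a degeneracy I handle separately. If instead $z = xy = 0$, then $x$ and $y$ are themselves adjacent, and the walk $a - x - y - b$ (using $ax = xy = yb = 0$) realizes $d(a,b) \le 3$. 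Since every pair of vertices is joined by such a path, $\Gamma(S)$ is connected and has diameter at most $3$.

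\emph{$\gir \Gamma(S) \in \{3,4,\infty\}$.} If $\Gamma(S)$ has no cycle then $\gir \Gamma(S) = \infty$ and there is nothing to prove, so suppose a cycle exists and let $a_1 - a_2 - \dots - a_n - a_1$ be one of minimal length $n$; I claim $n \le 4$. Arguing by contradiction, assume $n \ge 5$. By minimality the cycle has no chord, so $a_i a_j \neq 0$ for every non-adjacent pair on the cycle. Now consider $w := a_2 a_4$. If $w = 0$ then $a_2$ and $a_4$ are adjacent, producing a chord (and the triangle $a_2 - a_3 - a_4 - a_2$) that minimality forbids, a contradiction. If $w \neq 0$, then from $a_1 a_2 = a_2 a_3 = a_3 a_4 = a_4 a_5 = 0$ one computes $a_1 w = a_3 w = a_5 w = 0$, so the vertex $w$ is adjacent to both $a_1$ and $a_3$, yielding the $4$-cycle $a_1 - a_2 - a_3 - w - a_1$ and hence a cycle shorter than the minimal one. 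Either way we reach a contradiction, so $n \in \{3,4\}$.

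\emph{The main obstacle.} In all of these arguments the genuine difficulty lies not in the algebra --- which is immediate from associativity and commutativity --- but in the \emph{collision cases}: the auxiliary elements $z = xy$ or $w = a_2 a_4$ may coincide with one of the vertices already in play (e.g. $w = a_3$, which would force $a_3^2 = 0$), or the putative short path or cycle may degenerate because two of its listed vertices are equal. Because a semigroup with zero may contain elements $s \neq 0$ with $s^2 = 0$ and offers no cancellation with which to exclude such coincidences, each possibility must be dispatched by a separate small argument (often by choosing the auxiliary element from the other side of the cycle, e.g. replacing $a_1 - a_2 - a_3 - w$ with $a_3 - a_4 - a_5 - w$). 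Organizing this finite but fiddly case analysis cleanly is the crux; once it is done, the bounds $\diam \Gamma(S) \le 3$ and $\gir \Gamma(S) \in \{3,4,\infty\}$ follow. These results are due to DeMeyer, McKenzie, and Schneider \cite{DMS-zdsemi}.
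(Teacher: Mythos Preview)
The paper does not actually prove this proposition: it is stated purely as a compilation of results quoted from DeMeyer--McKenzie--Schneider, with the citations \cite[Theorems 1.2 and 1.5]{DMS-zdsemi} standing in lieu of any argument. There is therefore no ``paper's own proof'' to compare against.

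That said, your outline is the standard one and is essentially what appears in \cite{DMS-zdsemi}. A couple of remarks on the details you flag. In the diameter argument, the collision $z=a$ (or $z=b$) in fact cannot occur once you have assumed $ab\neq 0$: if $z=xy=a$ then $bz=ba=0$, contrary to hypothesis. The genuine collisions live in the $xy=0$ branch, where the walk $a-x-y-b$ may degenerate (e.g.\ $x=a$ forces $a^2=0$, or $x=y$ forces $x^2=0$); these are handled, as you indicate, by passing to an auxiliary product such as $ay$ and iterating. In the girth argument your choice $w=a_2a_4$ is the right one; note that the potential degeneracies $w\in\{a_1,a_2,a_3\}$ each immediately produce a chord of the minimal cycle (for instance $w=a_3$ gives $a_1a_3=a_1w=0$), so they resolve themselves without needing to switch to ``the other side of the cycle.'' With those observations the case analysis is shorter than you suggest, but your overall strategy is sound.
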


\begin{thm}\label{thm:Armendariz}
Let $\rho: S \ra T$ be an Armendariz map of nilpotent-free semigroups. \begin{enumerate}
\item\label{it:diam} If $\diam \Gamma(T) \neq 1$, then $\diam \Gamma(S) = \diam \Gamma(T)$.
\item\label{it:diam1} If $\diam \Gamma(T) =1$, then \[
\diam \Gamma(S) = \begin{cases}
1, &\text{if the induced map } \Gamma(S) \ra \Gamma(T) \text{ is bijective,}\\
2, &\text{otherwise.}
\end{cases}
\]
\item\label{it:girfin} If $\gir \Gamma(T) < \infty$, then $\gir \Gamma(S) = \gir \Gamma(T)$.
\item\label{it:girinf} If $\gir \Gamma(T) = \infty$, then $\gir \Gamma(S) \in \{4, \infty\}$ (and one usually expects $4$).
\item\label{it:clq} $\clq \Gamma(S) = \clq \Gamma(T)$.
\item\label{it:chro} $\chro(\Gamma(S)) = \chro(\Gamma(T))$.
\end{enumerate}
\end{thm}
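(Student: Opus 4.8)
The plan is to first establish a single structural fact describing $\Gamma(S)$ in terms of $\Gamma(T)$, from which every numbered assertion follows by a short graph-theoretic argument. First I would check that $\rho$ restricts to a well-defined surjection $\hat\rho$ from $\ver{\Gamma(S)}$ onto $\ver{\Gamma(T)}$: if $s$ is a nonzero zero-divisor with witness $s'\neq 0$, $ss'=0$, then conditions (2) and (3) force $\rho(s),\rho(s')$ to be nonzero with $\rho(s)\rho(s')=0$, so $\rho(s)$ is a nonzero zero-divisor, and surjectivity of $\hat\rho$ comes from surjectivity of $\rho$ together with (2),(3) applied to a witness. The crucial point is the \emph{adjacency dictionary}: for vertices $s_1,s_2$ of $\Gamma(S)$, condition (3) gives $s_1s_2=0 \iff \rho(s_1)\rho(s_2)=0$, and here nilpotent-freeness is essential. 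If $s_1,s_2$ lie in the same fiber over $t$ and were adjacent, then $\rho(s_1)\rho(s_2)=t^2=0$ would force $t=0$, impossible; so each fiber is an independent set. Two vertices in distinct fibers over $t_1,t_2$ are automatically distinct and are adjacent exactly when $t_1t_2=0$, i.e. exactly when $t_1,t_2$ are adjacent in $\Gamma(T)$. In other words, $\Gamma(S)$ is the \emph{blow-up} of $\Gamma(T)$ obtained by replacing each vertex $t$ by the independent set $\hat\rho^{-1}(t)$ and joining two such sets completely iff their base vertices are adjacent.

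From this I extract two tools. First, any section of $\hat\rho$ (one vertex chosen per fiber) embeds $\Gamma(T)$ as an \emph{induced} subgraph of $\Gamma(S)$. Second, $\hat\rho$ carries edges to edges, since distinct adjacent vertices have distinct adjacent images. For the clique number (\ref{it:clq}): a clique meets each fiber at most once, so $\hat\rho$ sends it injectively to a clique of $\Gamma(T)$, giving $\clq\Gamma(S)\le\clq\Gamma(T)$, while lifting a clique through a section gives the reverse inequality. For the chromatic number (\ref{it:chro}): pulling back any coloring of $\Gamma(T)$ along $\hat\rho$ yields a proper coloring of $\Gamma(S)$ with the same palette (adjacent vertices have adjacent, hence differently colored, images), so $\chro\Gamma(S)\le\chro\Gamma(T)$; the induced copy of $\Gamma(T)$ gives $\chro\Gamma(T)\le\chro\Gamma(S)$. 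Both arguments are insensitive to cardinality, covering infinite cliques and palettes.

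For girth I would first note that $\gir=3$ iff a triangle exists iff the clique number is at least $3$; since clique numbers agree, $\Gamma(S)$ has a triangle iff $\Gamma(T)$ does. Thus if $\gir\Gamma(T)=3$ then $\gir\Gamma(S)=3$, and if $\gir\Gamma(T)=4$ then $\Gamma(S)$ is triangle-free while the induced copy of $\Gamma(T)$ supplies a $4$-cycle, so $\gir\Gamma(S)=4$, proving (\ref{it:girfin}). If $\gir\Gamma(T)=\infty$, triangle-freeness of $\Gamma(S)$ rules out $3$, and since the girth of any such graph lies in $\{3,4,\infty\}$ by the cited Proposition, $\gir\Gamma(S)\in\{4,\infty\}$, proving (\ref{it:girinf}).

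The diameter is the most delicate part and will be the main obstacle, as it needs genuine distance computations rather than subgraph or coloring comparisons. I would prove a distance formula: for $s_1,s_2$ in distinct fibers over $t_1,t_2$, projecting an $\Gamma(S)$-path gives a $\Gamma(T)$-walk of equal length and lifting a $\Gamma(T)$-path gives an $\Gamma(S)$-path of equal length, so $d_{\Gamma(S)}(s_1,s_2)=d_{\Gamma(T)}(t_1,t_2)$; whereas two distinct vertices in the \emph{same} fiber are non-adjacent but share a neighbor, namely any lift of a neighbor of their common base vertex (which exists since every vertex of $\Gamma(T)$, being a nonzero zero-divisor, has a neighbor), so their distance is exactly $2$. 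Hence $\diam\Gamma(S)=\max\bigl(\diam\Gamma(T),\,2\bigr)$ when some fiber is nontrivial, and $\diam\Gamma(S)=\diam\Gamma(T)$ otherwise. Since a nonempty zero-divisor graph has no isolated vertices, it has at least two vertices and diameter at least $1$; thus the hypothesis $\diam\Gamma(T)\neq1$ gives $\diam\Gamma(T)\ge2$, so the maximum equals $\diam\Gamma(T)$ and (\ref{it:diam}) follows. When $\diam\Gamma(T)=1$ the graph $\Gamma(T)$ is complete: if $\hat\rho$ is bijective then $\Gamma(S)\cong\Gamma(T)$ has diameter $1$, and otherwise some fiber is nontrivial, forcing a same-fiber distance of $2$ while all cross-fiber distances remain $1$, so $\diam\Gamma(S)=2$, which is (\ref{it:diam1}).
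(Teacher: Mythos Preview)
Your proof is correct and covers all six parts. The underlying ideas are the same as the paper's, but your organization is genuinely cleaner: you first isolate a single structural lemma---that $\Gamma(S)$ is obtained from $\Gamma(T)$ by replacing each vertex by an independent set (the fiber) and joining two fibers completely iff their base vertices are adjacent---and then read off every invariant from that description. The paper instead argues each of (\ref{it:diam})--(\ref{it:chro}) from scratch, re-deriving the relevant pieces of the blow-up picture inside each case. Your distance formula ($d_{\Gamma(S)}(s_1,s_2)=d_{\Gamma(T)}(\rho(s_1),\rho(s_2))$ across fibers, and exactly $2$ within a fiber) handles both diameter statements in one stroke, whereas the paper treats $\diam\Gamma(T)\in\{0,2,3\}$ separately and then does a further case split for $\diam\Gamma(T)=1$. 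Likewise, your girth argument via ``triangle $\iff$ clique number $\ge 3$'' together with the induced-subgraph embedding is shorter than the paper's direct cycle-lifting argument. What the paper's more explicit case analysis buys is that each step is self-contained and visibly elementary; what your packaging buys is that the graph-theoretic content is separated from the semigroup content, making it transparent that the theorem is really a statement about blow-ups of graphs by independent sets. One small point to tidy: your sentence ``thus the hypothesis $\diam\Gamma(T)\neq1$ gives $\diam\Gamma(T)\ge2$'' silently assumes $\Gamma(T)$ nonempty; the empty case ($\diam=0$) should be mentioned, though it is trivial since then $\Gamma(S)$ is empty too.
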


\begin{proof}
First note that there is an induced map $\rho_*: \Gamma(S) \ra \Gamma(T)$ that takes vertices to vertices, takes edges to edges, and is surjective on both.  Moreover, given an edge $\{t,t'\}$ in $\Gamma(T)$, then for \emph{any} preimages $s$, $s'$ of $t$, $t'$ respectively, $\{s, s'\}$ is then an edge in $\Gamma(S)$.  These conditions are essentially the translation into zero-divisor graph language of what it \emph{means} to have an Armendariz map.

\begin{proof}[Proof of (\ref{it:diam})] If $\diam \Gamma(T) = 0$, then $\Gamma(T)$ is empty (since $T$ is nilpotent-free), which means that $\Gamma(S)$ is empty as well, so that $\diam \Gamma(S) = 0$.

Now suppose $\diam \Gamma(T) = 2$, and let $t, t' \in \ver{\Gamma(T)}$ such that $d(t, t') = 2$.  Let $s, s' \in \ver{\Gamma(S)}$ such that $\rho(s) = t$ and $\rho(s') = t'$.  Then $ss' \neq 0$, so $d(s, s') \geq 2$.  Hence $\diam \Gamma(S) \geq 2$.  On the other hand, take any pair of distinct elements $s, s' \in \ver {\Gamma(S)}$, and suppose $ss' \neq 0$.  Then $\rho(s) \rho(s') \neq 0$, so since $\diam \Gamma(T) = 2$, there is some $t'' \in \ver{\Gamma(T)}$ such that $\rho(s) t'' = \rho(s') t'' = 0$.  Picking $s'' \in S$ such that $\rho(s'') = t''$, it follows that $ss'' = s's'' = 0$.  Moreover, all of $s$, $s'$, $s''$ must be distinct since $S$ is nilpotent-free, so this makes a path of length 2 in $\Gamma(S)$.  It follows that $\diam \Gamma(S) = 2$.

Finally suppose that $\diam \Gamma(T) = 3$.  Since we know that $\diam \Gamma(S) \leq 3$, we must only show that $\diam \Gamma(S) >2$.  So let $t, t' \in \ver{\Gamma(T)}$ such that $d(t,t') = 3$.  Let $s, s' \in \ver{\Gamma(S)}$ with $\rho(s) = t$ and $\rho(s') = t'$.  Since $tt' \neq 0$, we have $ss' \neq 0$, so $d(s,s') \geq 2$.  If $d(s,s') = 2$, then there is some $s'' \in \ver{\Gamma(S)}$ such that $ss'' = s's'' = 0$.  But then $t \rho(s'') = t'\rho(s'') = 0$, so that $d(t, t') \leq 2$, contradicting the fact that $d(t,t') = 3$.  Thus there is no such $s''$, whence $d(s,s') \geq 3$, so that $d(s,s')=3$, whence $\diam \Gamma(S) \geq 3$, hence $=3$.
\end{proof}

\begin{proof}[Proof of (\ref{it:diam1})]
If the induced map is bijective, then the zero-divisor graphs are in fact isomorphic, so that in particular their diameters must coincide.

If not, let $t \in \ver{\Gamma(T)}$ such that there exist distinct $r,s \in \ver{\Gamma(S)}$ with $\rho(r) = \rho(s) = t$.  If $rs = 0$, then $t^2=\rho(r) \rho(s) = 0$, which would contradict the non-nilpotency condition of $T$.  Hence $\diam \Gamma(S) \geq 2$.  On the other hand, take any $a, b \in \ver{\Gamma(S)}$.  If $\rho(a) \neq \rho(b)$, then since $\diam \Gamma(T) = 1$, we have $d(\rho(a), \rho(b)) = 1$, which means there is an edge between them, whence $\rho(a) \rho(b) = 0$.  But since the map is Armendariz, $ab = 0$, so that $d(a,b) = 1$.  If however $\rho(a) = \rho(b)$ then the earlier argument shows that $ab \neq 0$.  But there is some $c\in \ver{\Gamma(S)}$ such that $ac = 0$.  Then $\rho(a) \rho(c) = \rho(b) \rho(c) = 0$, which then implies that $bc = 0$.  Thus, we have a path from $a$ to $c$ to $b$, which shows that $d(a,b) = 2$.  Hence, $\diam \Gamma(S) = 2$.
\end{proof}

\begin{proof}[Proof of (\ref{it:girfin})]
First suppose $\gir \Gamma(T) = 3$.  Then there exist distinct elements $t, t', t'' \in \ver{\Gamma(T)}$ such that $tt' = tt'' = t't'' = 0$.  Since $\rho$ is surjective, then, there exist distinct elements $s, s', s'' \in S$ such that $\rho(s) = t$, $\rho(s') = t'$, and $\rho(s'') = t''$.  Then by the Armendariz property, we have $ss' = ss'' = s's'' = 0$, which makes a 3-cycle in $\Gamma(S)$, so that $\gir \Gamma(S) = 3$.

If $\gir \Gamma(T) = 4$, then preimages of the vertices in a 4-cycle of $\Gamma(T)$ make a 4-cycle in $\Gamma(S)$ (as with the girth 3 case above), so we need only show that $\Gamma(S)$ has no 3-cycles.  By contradiction, suppose $s, s', s'' \in \ver{\Gamma(S)}$ form a 3-cycle.  Then by the Armendariz property, $\rho(s) \rho(s') = \rho(s) \rho(s'') = \rho(s') \rho(s'') = 0$.  But since $\Gamma(T)$ has no 3-cycles, it follows that these elements cannot all be distinct, so that without loss of generality $\rho(s) = \rho(s')$.  But then $\rho(s)^2 = 0$, so that since $T$ has no nilpotents, $\rho(s) = 0$, so that $s=0$, contradicting the fact that $s\in \ver{\Gamma(S)}$.  Thus, $\gir \Gamma(S) = 4$.
\end{proof}

\begin{proof}[Proof of (\ref{it:girinf})]
We need only show that $\gir \Gamma(S) \neq 3$.  But the proof of this is exactly the same as when we assumed that $\gir \Gamma(T) = 4$.

As for the informal comment about ``expectation'', we note here two common ways it can happen that $\Gamma(S)$ can have girth 4.  The first is to have $t, t' \in \ver{\Gamma(T)}$ such that $tt' = 0$ in such a way that neither of the sets $\rho^{-1}(\{t\})$,  $\rho^{-1}(\{t'\})$ are singletons.  The second is to have a vertex $t \in \ver{\Gamma(T)}$  such that there exist $t', t'' \in \ver{\Gamma(T)}$ with $tt' = tt'' = 0$ and such that $\rho^{-1}(\{t\})$ is not a singleton.
\end{proof}

\begin{proof}[Proof of (\ref{it:clq})]
Let $B$ be a set of vertices in $\Gamma(T)$ that forms a clique.  For each $b\in B$, let $c_b \in S$ such that $\rho(c_b) = b$.  Then each such $c_b \in \ver {\Gamma(S)}$.  Let $A := \{c_b \mid b \in B\}$.  It is easy to see that $A$ forms a clique in $\Gamma(S)$ of the same cardinality as $B$.  Hence $\clq \Gamma(S) \geq \clq \Gamma(T)$.

For the reverse inequality, let $A$ be a set of vertices in $\Gamma(S)$ that forms a clique.  Then for any pair $a, a'$ of distinct elements of $A$, we have $aa' = 0$, so that $\rho(a) \rho(a') = 0$.  Since $T$ has no nilpotent elements and $\rho$ sends no nonzero elements to zero, it follows that $\rho(a) \neq \rho(a')$.  It follows that $B := \{\rho(a) \mid a \in A\}$ forms a clique in $\Gamma(T)$ of the same cardinality as $A$.  Hence $\clq \Gamma(S) \leq \clq \Gamma(T)$.
\end{proof}

\begin{proof}[Proof of (\ref{it:chro})]
Let $A$ be a set and let $c: \ver{\Gamma(T)} \ra A$ be a coloring of $\Gamma(T)$.  Then $c \circ \rho_*$ is a coloring of $\Gamma(S)$.  To see this, take any edge $\{s,s'\}$ in $\Gamma(S)$.  Then $ss' = 0$, so $\rho(s) \rho(s') = 0$, so that $\rho(s) \neq \rho(s')$ (since $\rho$ takes no nonzero elements to zero and $T$ has no nilpotents), whence $\{\rho(s), \rho(s')\}$ is an edge of $\Gamma(T)$.  Thus \[
(c \circ \rho_*)(s) = c(\rho(s)) \neq c(\rho(s')) = (c \circ \rho_*)(s').
\]
Thus, $\chro(\Gamma(S)) \leq \chro(\Gamma(T))$.

For the reverse inequality, let $A$ be a set and let $c: \ver{\Gamma(S)} \ra A$ be a coloring of $\Gamma(S)$.  We define a set map $\alpha: \ver{\Gamma(T)} \ra \ver{\Gamma(S)}$ as follows:  For each $t \in \ver{\Gamma(T)}$, choose an element $s\in \ver{\Gamma(S)}$ such that $\rho(s) = t$; then let $\alpha(t) = s$.  Then $(c \circ \alpha)$ is a coloring of $\Gamma(T)$.  To see this, let $\{t, t'\}$ be an edge in $\Gamma(T)$.  Then we have $\rho(\alpha(t)) \rho(\alpha(t')) = tt' = 0$, so that by the Armendariz property, $\alpha(t)\alpha(t') = 0$, so that $\{\alpha(t), \alpha(t')\}$ is an edge in $\Gamma(S)$.  Thus \[
(c \circ \alpha)(t) = c(\alpha(t)) \neq c(\alpha(t')) = (c \circ \alpha)(t').
\]
Thus, $\chro(\Gamma(S)) \geq \chro(\Gamma(T))$, completing the proof.
\end{proof}

\end{proof}

\begin{rmk}\label{rmk:nonreduced}
If one wanted to expand this theorem to the case of semigroups with nilpotent elements, another condition would be necessary.   Namely, one could require along with the conditions in Definition~\ref{def:Arm} that if $s\neq s' \in S$ and $ss' = 0$, then $\rho(s) \neq \rho(s')$ (which, as we have seen, is automatic for Armendariz maps between nilpotent-free semigroups).  Indeed, given such a map, the entire proof of Theorem~\ref{thm:Armendariz} goes through!  However, in the cases of which we are aware, including the equivalence class quotient (cf. \S\ref{sec:eq}), this appears to be an unnatural condition to impose externally.
\end{rmk}

 \section{The equivalence class quotient}\label{sec:eq}
The connoisseur of zero-divisor graph theory may have noticed that our notion of Armendariz map bears some similarity to a concept that was first introduced by Mulay \cite{Mu-cszd} and was the main topic of the paper \cite{SpWi-eqzd} of Spiroff and Wickham.  Namely, if $R$ is a commutative ring, then for $x,y \in R$, these authors introduce an equivalence relation $\sim$ so that $x\sim y \iff \ann(x) = \ann(y)$.  Then $\Gamma_E(R)$ is the graph whose vertices are the \emph{equivalence classes} of the nonzero zero-divisors of $R$, drawing an edge from $[x]$ to $[y]$ if and only if $xy=0$.

However, one can see this instead as an operation on \emph{semigroups}.  Namely, if $S$ is a nilpotent-free semigroup, define an equivalence relation on $S$ as above and denote the set of equivalence classes as $E(S) = \{[s] \mid s\in S\}$.

\begin{prop}
$E(S)$ is a nilpotent-free semigroup, and the set map $e_S: S \ra E(S)$ sending $s \mapsto [s]$ is a kernel-free surjective semigroup homomorphism (hence an Armendariz map).
\end{prop}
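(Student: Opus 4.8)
The plan is to establish four things in sequence: that the proposed product on $E(S)$ is well-defined, that it makes $E(S)$ a nilpotent-free semigroup with zero, that $e_S$ is a kernel-free surjective homomorphism, and finally to invoke the remark following Definition~\ref{def:Arm} (that a surjective kernel-free homomorphism of nilpotent-free semigroups is Armendariz) to conclude. The only genuine subtlety is the well-definedness step; everything else is a routine transfer of structure along the quotient map.

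First I would equip $E(S)$ with the product $[s]\cdot [t] := [st]$ and verify that this does not depend on the choice of representatives, which is the heart of the matter. Suppose $\ann(s) = \ann(s')$; I claim $\ann(st) = \ann(s't)$ for every $t \in S$. Indeed, for $x \in S$ we have
\[
x \in \ann(st) \iff (xt)s = 0 \iff xt \in \ann(s) = \ann(s') \iff (xt)s' = 0 \iff x \in \ann(s't),
\]
using commutativity throughout. Thus $st \sim s't$, and by the symmetric argument in the second variable the class $[st]$ depends only on $[s]$ and $[t]$.

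Once well-definedness is in hand, the commutativity and associativity of the product, together with the fact that $[0]$ is absorbing, are inherited directly from the corresponding identities in $S$, so $E(S)$ is a commutative semigroup with zero. For nilpotent-freeness, suppose $[s]^n = [0]$ for some positive integer $n$. Then $[s^n] = [0]$, i.e. $\ann(s^n) = \ann(0) = S$. Since $s^n \in S$, taking the element $x = s^n$ yields $s^{2n} = 0$, and the nilpotent-freeness of $S$ forces $s = 0$, whence $[s] = [0]$. (Note that no unit element of $S$ is required here; I use only that $s^n$ itself annihilates $s^n$.)

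Finally, $e_S$ is surjective by construction, and it is a semigroup homomorphism precisely because the product was defined so that $e_S(st) = [st] = [s][t] = e_S(s)\,e_S(t)$. It is kernel-free: if $e_S(s) = [0]$, then $s \sim 0$, so $\ann(s) = S$; in particular $s^2 = s\cdot s = 0$, and nilpotent-freeness gives $s = 0$. By the remark following Definition~\ref{def:Arm}, a surjective kernel-free homomorphism between nilpotent-free semigroups is automatically an Armendariz map, which completes the proof. I expect no serious obstacle beyond the well-definedness verification above; the remaining checks are formal.
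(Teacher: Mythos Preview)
Your proof is correct and follows essentially the same approach as the paper's: the well-definedness argument via annihilators, the kernel-freeness via $s^2=0$, and the nilpotent-freeness via reduction to nilpotent-freeness of $S$ are all handled in the same way (the paper changes both representatives at once rather than one variable at a time, and deduces nilpotent-freeness of $E(S)$ from the already-established kernel-freeness of $e_S$, but these are cosmetic differences).
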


\begin{proof}
Without loss of generality we may assume that $S \neq 0$.

We must first show that the multiplication $( [s]\cdot [t] := [st])$ in $E(S)$ is well-defined.  That is, given $s,t, s', t' \in S$  such that $s \sim s'$ and $t \sim t'$, we must show that $st \sim s't'$.  For any $a\in \ann(st)$, we have $ast = (as)t = 0$, so that $as \in \ann(t) = \ann(t')$, so that $0=ast' = (at')s$, whence $at' \in \ann(s) = \ann(s')$, so that $as't' = 0$.   Thus, $\ann(st) \subseteq \ann(s't')$, and by symmetry the opposite inclusion also holds.

Surjectivity is obvious, as is the fact that $e_S$ is a semigroup homomorphism.

Next, suppose that $[s] = 0$.  This means that $\ann(s) = \ann(0) = S$, and in particular $s\in \ann s$, so that $s^2 = 0$.  But since $S$ is nilpotent-free, $s=0$.  Thus, $S$ is kernel-free.  It also follows that $E(S)$ is nilpotent-free, since if $[s]^n = [s^n]=0$, then $s^n=0$, whence $s=0$.
\end{proof}

Indeed, more is true.  Namely, $E(S)$ is the \emph{final} Armendariz image of $S$, in the following sense:

\begin{thm}
Let $S$, $T$ be nilpotent-free semigroups, and let $g: S \ra T$ be an Armendariz map.  Then there is a unique Armendariz map $h: T \ra E(S)$ such that $h \circ g = e_S$.
\end{thm}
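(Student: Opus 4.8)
The plan is to \emph{define} $h$ by pushing forward along preimages of $g$, then verify the Armendariz axioms and uniqueness. Since $g$ is surjective, every $t \in T$ admits some $s \in S$ with $g(s) = t$; I set $h(t) := e_S(s) = [s]$. Granting well-definedness, the identity $h \circ g = e_S$ then holds by construction, and uniqueness is immediate: if $h' : T \ra E(S)$ is any map with $h' \circ g = e_S$, then for each $t = g(s)$ we have $h'(t) = h'(g(s)) = e_S(s) = h(t)$, using only the surjectivity of $g$. So neither the defining equation nor uniqueness poses any real difficulty once $h$ is shown to exist.

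The crux, and the step I expect to be the main obstacle, is the well-definedness of $h$, i.e.\ its independence from the chosen preimage. Suppose $g(s) = g(s') = t$; I must show $[s] = [s']$, that is, $\ann(s) = \ann(s')$. This is precisely where the Armendariz hypothesis carries the weight: for any $a \in S$, condition (3) of Definition~\ref{def:Arm} gives $as = 0 \iff g(a)g(s) = 0$, and since $g(s) = g(s')$ the right-hand side equals $g(a)g(s') = 0 \iff as' = 0$. Hence $\ann(s) = \ann(s')$, so $[s] = [s']$ and $h$ is well-defined. I would prove this fact first and foremost, as the remaining verifications are routine by comparison.

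It then remains to check the three conditions of Definition~\ref{def:Arm} for $h$, each a short computation. Surjectivity of $h$ follows because $e_S = h \circ g$ is surjective, so $h$ must be too. For the kernel-free condition, fix $t = g(s)$ and chain the kernel-free property of $g$ with that of $e_S$: we get $t = 0 \iff s = 0 \iff [s] = 0$, i.e.\ $t = 0 \iff h(t) = 0$. For the zero-product condition, take $t = g(s)$ and $t' = g(s')$; since $e_S$ is a homomorphism, $h(t)h(t') = [s][s'] = [ss']$, and by kernel-freeness of $e_S$ together with condition (3) for $g$ we obtain $[ss'] = 0 \iff ss' = 0 \iff g(s)g(s') = 0 \iff tt' = 0$. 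This establishes that $h$ is Armendariz and completes the argument. Note that $h$ need not be a semigroup homomorphism, since $g$ itself need not be, but this is irrelevant: the Armendariz framework requires only the zero-product equivalence, which is exactly what the computation above delivers.
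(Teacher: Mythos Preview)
Your proof is correct and follows essentially the same route as the paper's: define $h$ via preimages, check well-definedness by showing $\ann(s) = \ann(s')$ using the Armendariz condition on $g$, then verify the three Armendariz axioms for $h$ by chaining through $g$ and $e_S$. One minor difference: you explicitly prove uniqueness (which the theorem asserts), whereas the paper's proof omits this step entirely; your argument for it is the natural one.
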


\begin{proof}
Given $t \in T$, there is some $s\in S$ such that $g(s) = t$; then we let $h(t) := [s]= e_S(s)$.

To see that $h$ is well-defined, let $s, s' \in S$ (we may assume both are nonzero) such that $g(s) = g(s') = t$; we must show that $[s] = [s']$, i.e. $\ann(s) = \ann(s')$.  So let $a\in \ann(s)$. Then $as = 0$, so $g(a) g(s) = g(a) g(s') = 0$, so $as' = 0$, so $a \in \ann(s')$.  By symmetry, $\ann(s) = \ann(s')$.

Next, we show that $h$ is an Armendariz map.  Given $[s] \in E(S)$, we have $h(g(s)) = [s]$, so $h$ is surjective.  We have $h(0) = h(g(0)) = [0] = 0$.  If $h(t) = 0$, let $s \in S$ with $t = g(s)$; then $h(g(s)) = [s] = 0$, so $s=0$, whence $t = g(s) = g(0) = 0$.  If $tt' = 0$, let $s, s' \in S$ with $g(s) = t$, $g(s') = t'$; then $g(s) g(s') = 0$, so $ss' = 0$, so $0 = [ss'] = [s][s'] = h(t) h(t')$.  Finally, if $h(t) h(t') = 0$, this means that $[s][s'] = [ss']= 0$, whence $ss' = 0$, so $g(s)g(s') = tt' = 0$.
\end{proof}

If $R$ is a commutative ring, then $\Gamma(E(R), \cdot) = \Gamma_E(R)$, and these graphs have been studied by \cite{SpWi-eqzd} and others.  On the other hand, if $X$ is a pearled topological space (that is, every closed set contains a closed point; see \S\ref{sec:lattice} and \S\ref{sec:prl}), then $E(\sigma(X), \cap) = (\sigma(Y), \cap)$, where $Y = \Prl(X)$  is the subspace consisting of the closed points of $X$ -- that is, the T$_1$-ification of $X$.  The analogous statement also holds for atomic bounded distributive lattices.

\begin{rmk}
The equivalence class quotient may of course also be defined for semigroups that are not nilpotent-free.  However, no analogue of Theorem~\ref{thm:Armendariz} holds at this level of generality.

To see this, consider the example $R := \F_2[x,y] / (x^2, xy, y^2)$, where $x,y$ are indeterminates and $\F_2$ is the field of two elements.  The zero-divisors are $x$, $y$, and $x+y$, all of which annihilate each other (and themselves), which means that $\Gamma(R)$ is a triangle and $\Gamma_E(R)$ is a single point.  These two graphs have different diameters (1 and 0 respectively), different girths (3 and $\infty$ respectively), different clique numbers (3 and 1 respectively), and different chromatic numbers (3 and 1 respectively).
\end{rmk}

\section{Polynomial and power-series algebras and annihilating-ideal graphs}\label{sec:content}
For a commutative ring $R$, let $\FId R$ (resp. $\CId R$) be the set of finitely generated (resp. countably generated) ideals of $R$; this of course forms a semigroup with zero under multiplication; it is nilpotent-free if and only if $R$ is reduced.  The \emph{content map} (resp. \emph{generalized content map}) $c: R[X] \ra \FId R$ (resp. $c': R[\![X]\!] \ra \CId R$) is the set map that sends each polynomial (resp. power series) to its \emph{content}  (resp. \emph{generalized content}), defined to be the ideal generated by its coefficients.  These maps frequently fail to be a homomorphism, but are often Armendariz maps.

Indeed, Armendariz \cite[Lemma 1]{Arm-ext} showed that whenever $R$ is reduced and $f,g \in R[X]$, then $fg=0$ if and only if $ab=0$ for all coefficients $a$ of $f$ and $b$ of $g$.  In other words (in our terms), the content map is an Armendariz map for such rings.  Analogously, we have the following:
\begin{lemma}
Let $R$ be a reduced ring and $f,g \in R[\![X]\!]$ such that $fg = 0$.  Then $c'(f) c'(g) = 0$, where $c': R[\![X]\!] \ra (\CId R,\cdot)$ is the map that sends a power series to the ideal generated by its coefficients.  Hence, $c'$ is an Armendariz map.
\end{lemma}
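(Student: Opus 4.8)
The plan is to reduce the statement, by passing to the minimal primes of $R$, to the elementary fact that a power series ring over a domain is again a domain. Write $f = \sum_{i \geq 0} a_i X^i$ and $g = \sum_{j \geq 0} b_j X^j$. Since $c'(f) c'(g)$ is exactly the ideal generated by all products $a_i b_j$, the assertion $c'(f) c'(g) = 0$ is equivalent to the statement that $a_i b_j = 0$ for every pair of coefficients. As $R$ is reduced, its nilradical vanishes, so $\bigcap_P P = 0$ where $P$ ranges over the minimal primes of $R$; hence it suffices to prove $a_i b_j \in P$ for each such $P$.

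Fixing a minimal prime $P$, I would apply the coefficient-wise reduction $R[\![X]\!] \ra (R/P)[\![X]\!]$, which is a ring homomorphism, to obtain $\bar f \, \bar g = 0$ from $fg = 0$, where bars denote reduction modulo $P$. Because $R/P$ is a domain, so is $(R/P)[\![X]\!]$: the lowest-degree coefficient of a product of two nonzero series equals the product of the lowest-degree coefficients of the factors, which is nonzero. Therefore $\bar f = 0$ or $\bar g = 0$, and in either case every product $\bar a_i \bar b_j$ vanishes, i.e. $a_i b_j \in P$. Intersecting over all minimal primes gives $a_i b_j = 0$, as desired.

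To finish the claim that $c'$ is an Armendariz map, the remaining conditions of Definition~\ref{def:Arm} are routine. The map is surjective onto $\CId R$ because any countably generated ideal $(r_0, r_1, \dotsc)$ is the content of $\sum_i r_i X^i$; condition (2) holds since $c'(f) = 0$ forces every coefficient of $f$ to vanish; and for condition (3) the forward implication is precisely what was shown above, while the converse is immediate, as $c'(f) c'(g) = 0$ makes each $a_i b_j = 0$ and hence each coefficient $\sum_{i+j=n} a_i b_j$ of $fg$ zero.

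I expect the forward implication to be the main obstacle, and it is where reducedness is indispensable. A direct combinatorial approach imitating Armendariz's original polynomial argument is hampered by the absence of a top-degree coefficient: one can only induct upward from the constant terms, and although this can be pushed through (for instance, $a_0 b_0 = 0$ together with the degree-one relation yields $a_0^2 b_1 = 0$, whence $a_0 b_1 = 0$ in a reduced ring), the bookkeeping grows delicate. The minimal-prime reduction avoids these difficulties and isolates the single point where the hypothesis that $R$ is reduced is used.
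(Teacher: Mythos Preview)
Your proof is correct. The paper itself does not give an independent argument for the key implication $fg=0 \Rightarrow c'(f)c'(g)=0$; it simply cites \cite[Theorem 12.3]{Wat-book} and then verifies the remaining Armendariz conditions (surjectivity, $f=0 \iff c'(f)=0$, and the easy converse via $c'(fg) \subseteq c'(f)c'(g)$). You instead supply a direct, self-contained proof by reducing modulo each minimal prime of $R$ and using that power series rings over integral domains are domains, then intersecting over all minimal primes and invoking reducedness. This is the standard proof of the cited fact and is almost certainly what lies behind the reference; the advantage of writing it out is that the reader sees exactly where reducedness enters, while the paper's version keeps the exposition brief. Your verification of the remaining Armendariz conditions matches the paper's essentially word for word.
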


\begin{proof}
The fact that $c'(f)c'(g)=0$ when $fg=0$ is given in \cite[Theorem 12.3]{Wat-book}.  To show that the map is Armendariz, all that remains is to note that it is surjective, that $f=0$ if and only if $c'(f) = 0$, and that since $c'(fg) \subseteq c'(f) c'(g)$, it follows that if $c'(f)c'(g) =0$, one has $c'(fg)=0$ and thus $fg=0$.
\end{proof}

A ring where the content map is Armendariz is called an \emph{Armendariz ring}.  On the other hand, a ring in which the content map is a homomorphism is called a \emph{Gaussian ring}, and though it is well-known that not all Armendariz rings are Gaussian, we would like to add the following large class of counterexamples:

\begin{lemma}\label{lem:Arm-not-Gauss}
Let $(R,\m)$ be a quasi-local reduced ring that is not an integral domain.  Then $R$ is not Gaussian.
\end{lemma}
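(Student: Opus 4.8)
The plan is to exhibit explicit polynomials $f, g \in R[X]$ for which the content formula $c(fg) = c(f)c(g)$ fails, which by definition shows $R$ is not Gaussian. Since $R$ is not a domain I may fix nonzero elements $a, b \in R$ with $ab = 0$; because $R$ is reduced, neither $a$ nor $b$ is nilpotent, so in particular $a^3 \neq 0$ and $b^3 \neq 0$. I would take the ``crossed'' pair $f := a + bX$ and $g := b + aX$. A short computation gives $fg = (a^2 + b^2)X$, since the constant and quadratic coefficients $ab$ and $ba$ vanish; hence $c(fg) = (a^2 + b^2)$, while $c(f) = c(g) = (a,b)$ and therefore $c(f)c(g) = (a,b)^2 = (a^2, b^2)$, again using $ab = 0$. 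Recalling that the inclusion $c(fg) \subseteq c(f)c(g)$ always holds, it suffices to show it is strict here, i.e. that $(a^2 + b^2) \neq (a^2, b^2)$.

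To rule out equality I would argue by contradiction, assuming $(a^2+b^2) = (a^2, b^2)$ and in particular $a^2 \in (a^2 + b^2)$, so that $a^2 = r(a^2 + b^2)$ for some $r \in R$. Multiplying this relation by $a$ and using $ab^2 = 0$ yields $a^3 = r a^3$, i.e. $a^3(1-r) = 0$; multiplying it instead by $b$ and using $a^2 b = 0$ yields $r b^3 = 0$. Here the quasi-local hypothesis does the decisive work: if $r \in \m$ then $1 - r$ is a unit, forcing $a^3 = 0$, whereas if $r \notin \m$ then $r$ is a unit, forcing $b^3 = 0$. Either outcome contradicts the fact that $a$ and $b$ are nonzero elements of a reduced ring, so $(a^2+b^2) \neq (a^2, b^2)$ and $R$ is not Gaussian.

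I expect the main subtlety to lie in the \emph{choice} of witness polynomials rather than in any hard computation: the crossed pair $f = a + bX$, $g = b + aX$ is engineered precisely so that the product collapses to a single monomial with coefficient $a^2 + b^2$, making the comparison of $(a^2+b^2)$ with $(a^2, b^2)$ the crux. Once that reduction is in place, the two hypotheses combine cleanly---quasi-locality to split on whether the multiplier $r$ lies in $\m$ or is a unit, and reducedness to rule out each resulting nilpotency---so the short case analysis, and not any delicate estimate, is where the hypotheses are genuinely used.
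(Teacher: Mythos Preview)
Your proof is correct and follows essentially the same route as the paper: the same crossed pair $f=a+bX$, $g=b+aX$, the same reduction to showing $a^2\notin(a^2+b^2)$, and the same case split on whether the multiplier lies in $\m$. The only cosmetic difference is in the endgame algebra---you multiply the relation $a^2=r(a^2+b^2)$ by $a$ and by $b$ to force $a^3=0$ or $b^3=0$, whereas the paper divides through to get $a^2=wb^2$ or $b^2=va^2$ and then deduces $a^4=0$ or $b^4=0$; either way reducedness gives the contradiction.
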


\begin{proof}
Take any pair $a,b \in R \setminus \{0\}$ such that $ab=0$.  Let $X$ be an indeterminate over $R$, and let $f := aX + b$, $g := bX+a \in R[X]$.  We have $c(f) = c(g) = (a,b)$, so that $c(f)c(g) = (a^2, b^2)$.  On the other hand, $fg=(a^2 +b^2)X$, so that $c(fg) = (a^2 + b^2)$.  Suppose that $a^2 \in (a^2 + b^2)$.  That is, there is some $u\in R$ such that $a^2 = (a^2 + b^2) u$, so that \begin{equation}\label{eq:1-uu}
(1-u) a^2 = u b^2.
\end{equation}
If $u \notin \m$, then $u$ is a unit so that $v := (1-u)/u \in R$, and multiplying Equation~\ref{eq:1-uu} by $u^{-1}$ gives \[
v a^2 = b^2.
\]
Thus, $b^4 = v a^2 b^2 = v (ab)^2 = v 0^2 = 0$, so that $b$ is nilpotent, contradicting the fact that $R$ is reduced.

Since the assumption that $u\notin \m$ led to a contradiction, it follows that $u \in \m$.  But then $1-u \notin \m$, whence $1-u$ is a unit, $w=u/(1-u) \in R$, and multiplying both sides of Equation~\ref{eq:1-uu} by $(1-u)^{-1}$ gives \[
a^2 = w b^2.
\]
Thus, $a^4 = a^2 w b^2 = (ab)^2 w = 0^2 w = 0$, whence $a$ is nilpotent, again contradicting the fact that $R$ is reduced.

Hence, $a^2 \in c(f) c(g) \setminus c(fg)$.
\end{proof}

Thus, by using Theorem~\ref{thm:Armendariz} along with what is already known in the literature, one can give relationships between graph-theoretic invariants of the zero-divisor graphs of $R[X]$ and $\FId R$ (resp. $R[\![X]\!]$ and $\CId R$) when $R$ is reduced.  Indeed, much work has already been done on the relationship between invariants of $\Gamma(R)$, $\Gamma(R[X])$, and $\Gamma(R[\![X]\!])$, so the studies may be combined to get relationships between all five zero-divisor graphs.  We choose not to embark on such a systematic study here.  However, as a contribution to the former, we have:

\begin{prop}
Let $R$ be reduced.  Then $\clq{\Gamma(R)} = \clq{\Gamma(R[X])} = \clq{\Gamma(R[\![X]\!])}$ and $\chro(\Gamma(R)) = \chro(\Gamma(R[X])) = \chro(\Gamma(R[\![X]\!]))$.
\end{prop}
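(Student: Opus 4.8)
The plan is to route everything through the two content maps and then compare directly with $\Gamma(R)$. Since $R$ is reduced, the content map $c : R[X] \ra \FId R$ and the generalized content map $c' : R[\![X]\!] \ra \CId R$ are Armendariz maps (by Armendariz's lemma, respectively the lemma above), so Theorem~\ref{thm:Armendariz}(\ref{it:clq}) and (\ref{it:chro}) give
\[
\clq \Gamma(R[X]) = \clq \Gamma(\FId R), \qquad \chro(\Gamma(R[X])) = \chro(\Gamma(\FId R)),
\]
and likewise $\clq \Gamma(R[\![X]\!]) = \clq \Gamma(\CId R)$ and $\chro(\Gamma(R[\![X]\!])) = \chro(\Gamma(\CId R))$. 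Thus it suffices to sandwich $\clq \Gamma(\FId R)$ and $\chro(\Gamma(\FId R))$ (and their countable analogues) between two copies of the corresponding invariant of $\Gamma(R)$, after which transitivity yields the three-way equalities in the statement.

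For one direction I would observe that $\Gamma(R)$ sits inside $\Gamma(R[X])$ (and inside $\Gamma(R[\![X]\!])$) as an \emph{induced} subgraph: a nonzero zero-divisor of $R$ remains one in $R[X]$, and for constants $r,r'$ the relation $rr'=0$ is the same whether computed in $R$ or in $R[X]$. Since both clique number and chromatic number are monotone under passage to induced subgraphs, this yields $\clq \Gamma(R) \leq \clq \Gamma(R[X])$ and $\chro(\Gamma(R)) \leq \chro(\Gamma(R[X]))$, and the same with $R[\![X]\!]$.

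For the clique equality I would close the loop by showing $\clq \Gamma(\FId R) \leq \clq \Gamma(R)$. Given a clique $\{I_a\}$ of nonzero zero-divisor ideals, choose a nonzero element $r_a \in I_a$ for each $a$. For $a \neq b$ we have $r_a r_b \in I_a I_b = 0$, and if $r_a = r_b$ then $r_a^2 = 0$, forcing $r_a = 0$ by reducedness---a contradiction; hence the $r_a$ are distinct and pairwise orthogonal, i.e. a clique in $\Gamma(R)$ of the same cardinality. Combined with the previous paragraph this gives $\clq \Gamma(R) = \clq \Gamma(R[X]) = \clq \Gamma(\FId R)$, and identically on the power-series side.

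The chromatic equality is the delicate point, since an ideal has many generators and the naive ``color $I$ by the color of a chosen element'' need not be proper. Here the key facts are: (i) if $I$ is a zero-divisor ideal and $0 \neq w \in \ann(I)$, then every nonzero $r \in I$ satisfies $rw = 0$, so every nonzero element of $I$ is a vertex of $\Gamma(R)$; and (ii) if $II' = 0$, then for any proper coloring $c_0$ of $\Gamma(R)$ the color sets $c_0(I \setminus \{0\})$ and $c_0(I' \setminus \{0\})$ are \emph{disjoint}. Indeed, a common color would arise from $r \in I$, $r' \in I'$ with $c_0(r) = c_0(r')$; since $rr' \in II' = 0$ we cannot have $r \neq r'$ (they would be adjacent), while $r = r'$ would force $r^2 \in II' = 0$ and hence $r = 0$. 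Choosing one color from each (nonempty) set $c_0(I \setminus \{0\})$ then defines a proper coloring of $\Gamma(\FId R)$ using the same palette, so $\chro(\Gamma(\FId R)) \leq \chro(\Gamma(R))$; the identical argument applies to $\CId R$, as it uses only that the nonzero elements of a zero-divisor ideal are zero-divisors and that $II' = 0$ annihilates all cross products. Sandwiching as before completes the proof. I expect step (ii)---the disjointness of the color sets---to be the main obstacle, since it is precisely what rescues the chromatic comparison from the failure of the naive coloring.
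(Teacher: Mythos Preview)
Your proof is correct and is essentially the paper's argument, reorganized. The paper defines $\beta: \ver{\Gamma(R[\![X]\!])} \ra \ver{\Gamma(R)}$ by choosing a nonzero coefficient of each $f$ and then pushes cliques forward and pulls colorings back along $\beta$ directly, whereas you first invoke Theorem~\ref{thm:Armendariz} on the content maps to reduce to $\Gamma(\FId R)$ and $\Gamma(\CId R)$ and then perform the identical ``pick a nonzero element and use reducedness'' step at the ideal level; since choosing a nonzero element of $c'(f)$ is exactly what $\beta$ does, the two routes coincide. (Incidentally, your worry that the naive coloring ``need not be proper'' is unfounded: your own step (ii) shows it is proper, and this is precisely how the paper proceeds.)
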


\begin{proof}
We define a set map $\beta: \ver{\Gamma(R[\![X]\!])} \ra \ver{\Gamma(R)}$ as follows.  For each $f\in \ver{\Gamma(R[\![X]\!])}$, pick a nonzero element $b\in c'(f)$ and let $\beta(f) = b$.  To see that this is well-defined, note that for any such $f$, there is some nonzero $g\in R[\![X]\!]$ such that $fg = 0$, so that since $c'$ is an Armendariz map, we have $c'(f) c'(g) = 0$, so that in particular $\beta(f) \beta(g) = 0$ and $\beta(f)$ is a nonzero zero-divisor of $R$ as required.

For the clique number claim, first note that any clique in $\Gamma(R)$ is a clique in $\Gamma(R[X])$, and that any clique in $\Gamma(R[X])$ is a clique in $\Gamma(R[\![X]\!])$; hence $\clq{\Gamma(R)} \leq \clq{\Gamma(R[X])} \leq \clq{\Gamma(R[\![X]\!])}$).  Conversely, let $\{f_\lambda \mid \lambda \in \Lambda\}$ be a clique in $\Gamma(R[\![X]\!])$.  Then for any $\lambda \neq \mu$, we have $f_\lambda f_\mu = 0$, whence (since $c'$ is an Armendariz map) $\beta(f_\lambda) \beta(f_\mu) \in c'(f_\lambda) c'(f_\mu) = 0$, and moreover  $\beta(f_\lambda) \neq \beta(f_\mu)$ since $R$ is reduced, so $\{\beta(f_\lambda) \mid \lambda \in \Lambda\}$ forms a clique in $\Gamma(R)$.  Thus $\clq{\Gamma(R[\![X]\!])} \leq \clq{\Gamma(R)}$.

For chromatic number, first note that any coloring of $\Gamma(R[\![X]\!])$ restricts to a coloring of $\Gamma(R[X])$, and any coloring of $\Gamma(R[X])$ restricts to a coloring of $\Gamma(R)$, so that $\chro(\Gamma(R)) \leq \chro(\Gamma(R[X])) \leq \chro(\Gamma(R[\![X]\!]))$.  Conversely, let $q: \ver{\Gamma(R)} \ra A$ be a coloring of $\Gamma(R)$.  Then for any $f, g \in \ver{\Gamma(R[\![X]\!])}$ such that $fg = 0$, we have $\beta(f) \beta(g) = 0$, so that $(q \circ \beta)(f) \neq (q \circ \beta)(g)$, whence $q \circ \beta: \ver{\Gamma(R[\![X]\!])} \ra A$ is a coloring of $\Gamma(R[\![X]\!])$.  Thus, $\chro(\Gamma(R[\![X]\!])) \leq \chro(\Gamma(R))$.
\end{proof}

\begin{rmk}
The invariants of chromatic number and clique number have not been studied as much for the graph $\Gamma(R)$ as for the graph $\Gamma_0(R)$ of Beck \cite{Be-color}.  One lets \emph{all} the elements of $R$ be vertices of $\Gamma_0(R)$, and $a,b$ have an edge when $ab=0$.  If $\Gamma(R) = \emptyset$, then Beck's graph is just a star graph where the connecting vertex is $0$.  If $\Gamma(R) \neq \emptyset$, one obtains $\Gamma_0(R)$ from it by first adjoining $0$ and making an edge between every vertex of $\Gamma(R)$ and $0$ (i.e. taking the \emph{cone over $0$} of $\Gamma_0(R)$), and then connecting all the non-zerodivisors to $0$.

If $R$ is an integral domain, $\chro(\Gamma_0(R)) = \clq{\Gamma_0(R)} = 2$ and $\chro(\Gamma(R)) = \clq{\Gamma(R)} = 0$.

If $R$ is not a domain, then $\chro(\Gamma_0(R))$ is infinite iff $\chro(\Gamma(R))$ is infinite, and if so, they are equal.  Moreover, $\clq{\Gamma_0(R)}$ is infinite iff $\clq{\Gamma(R)}$ is infinite, and if so they are equal.

Finally, $\chro(\Gamma(R))$ is finite but nonzero, then $\chro(\Gamma(R)) = \chro(\Gamma_0(R)) - 1$, and if $0<\clq{\Gamma(R)}$ is finite but nonzero, then $\clq{\Gamma(R)} = \clq{\Gamma_0(R)} -1$.

\begin{proof}
The assertions are clear when $R$ is a domain, so from now on we assume $R$ has nonzero zero-divisors.

For any maximal clique in $\Gamma(R)$, one may augment it with the vertex $0$ to get a maximal clique in $\Gamma_0(R)$.  Conversely, no maximal clique in $\Gamma_0(R)$ includes a non-zerodivisor, and every such clique includes $0$, so one may omit the vertex $0$ to obtain a maximal clique in $\Gamma(R)$ with one fewer vertex.

Now, suppose we have a coloring $c: \ver{\Gamma(R)} \ra A$.  We obtain a coloring of $\Gamma_0(R)$ as follows.  Let $y$ be a non-zero zero-divisor of $R$ and let $d=c(y)$.  Let $z$ be a new color.  Then we extend $c$ to define $c': \ver{\Gamma_0(R)}=R \ra A \cup \{z\}$ by letting $c(0) = z$ and $c(n) = d$ for all non-zerodivisors of $R$.

Conversely, suppose we have a coloring $c: \ver{\Gamma_0(R)} = R \ra B$.  Let $z = c(0)$.  Then clearly $c^{-1}(\{z\}) = \{0\}$, so by restricting to the vertices of $\Gamma(R)$, we may omit $z$ to get a coloring $c': \ver{\Gamma(R)} \ra B \setminus \{z\}$.
\end{proof}
\end{rmk}

The zero-divisor graph of  the semigroup $(\Id R, \cdot)$ was introduced by Behboodi and Rakeei \cite{BehRak-ag1}, who called it the \emph{annihilating-ideal graph} of $R$, denoted $\AG(R)$.   Using our framework of Armendariz maps along with some results of \cite{AnMu-zdgraph}, we prove the following, conjectured by Behboodi and Rakeei \cite[Conjecture 1.11]{BehRak-ag2} and first proved in \cite{AANIS-colorai} via completely different methods from ours.

\begin{thm}\label{thm:AGconj}
Let $R$ be a reduced ring with more than two minimal primes.  Then $\gir \AG(R) = 3$.
\end{thm}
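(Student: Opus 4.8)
The plan is to reduce the claim to the construction of a single triangle in $\AG(R)=\Gamma(\Id R,\cdot)$. First I would record that $(\Id R,\cdot)$ is nilpotent-free: if $I^n=0$ then every $a\in I$ satisfies $a^n=0$, hence $a=0$ because $R$ is reduced, so $I=0$. Thus $\AG(R)$ is genuinely the zero-divisor graph of a nilpotent-free semigroup, and by the general fact quoted just before Theorem~\ref{thm:Armendariz} its girth lies in $\{3,4,\infty\}$. Consequently it suffices to produce three pairwise distinct nonzero ideals $I_1,I_2,I_3$ with $I_aI_b=0$ for $a\ne b$. The observation that drives everything is that, since $R$ is reduced, $\bigcap_{Q\in\Min R}Q=0$; hence for ideals $I,J$ one has $IJ=0$ exactly when $IJ\subseteq Q$ for every minimal prime $Q$, i.e. when each minimal prime contains $I$ or contains $J$.

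Next I would pick three distinct minimal primes and test the natural candidates. For a minimal prime $P$ of a reduced ring one has $\ann(P)=\bigcap_{Q\in\Min R,\,Q\ne P}Q$, which I would prove from $\bigcap_Q Q=0$ together with incomparability of minimal primes. Taking distinct $P_1,P_2,P_3$ and setting $I_a:=\ann(P_a)$, the index set $\{Q\ne P_a\}\cup\{Q\ne P_b\}$ is all of $\Min R$ for $a\ne b$, so $I_aI_b\subseteq \bigcap_{Q\ne P_a}Q\cap\bigcap_{Q\ne P_b}Q=\bigcap_{\text{all }Q}Q=0$. Distinctness is automatic once the $I_a$ are nonzero: if $Q\in\Min R$ satisfies $I_a\not\subseteq Q$ then $Q=P_a$, while $I_b\subseteq P_a$ for $b\ne a$, so $I_a\ne I_b$.

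The genuine obstacle is nonzeroness of the $I_a$. When $\Min R$ is finite, $I_a=\bigcap_{Q\ne P_a}Q\ne 0$ follows from prime avoidance (a prime containing a finite intersection of primes contains one of them, contradicting incomparability), so $\{I_1,I_2,I_3\}$ is at once a triangle. But with infinitely many minimal primes $\ann(P_a)$ can vanish (precisely when $P_a$ is not isolated in $\Min R$), and the naive construction breaks down. I would repair this using the topology of $\Min R$: by the Henriksen--Jerison theorem the minimal prime spectrum of a reduced ring is Hausdorff, and any Hausdorff space with at least three points contains three nonempty pairwise disjoint open sets $U_1,U_2,U_3$. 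Setting $I_a:=\bigcap_{Q\notin U_a}Q$, the set $\{Q:I_a\subseteq Q\}$ equals the closed set $\Min R\setminus U_a$ (so $I_a\ne 0$ because $U_a\ne\emptyset$), while $U_a\cap U_b=\emptyset$ gives $I_aI_b\subseteq\bigcap_{\text{all }Q}Q=0$ and distinctness exactly as before; this produces the required triangle and hence $\gir\AG(R)=3$ in full generality. Alternatively, one may cite \cite{AnMu-zdgraph} to obtain three pairwise-orthogonal nonzero elements $x,y,z\in R$ directly and pass to the principal ideals $(x),(y),(z)$, which are distinct because $(x)=(y)$ together with $xy=0$ would force $x^2=0$ and hence $x=0$.
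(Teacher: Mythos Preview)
Your argument is correct, and both routes you sketch are valid; but neither is the route the paper takes, so a comparison is in order.

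The paper's proof is an application of the Armendariz-map machinery developed in \S\ref{sec:Arm}--\ref{sec:content}: from the hypothesis on minimal primes one sees that the total quotient ring of $R$ is not a product of two fields, so \cite[Theorems~2.2, 2.4]{AnMu-zdgraph} give $\gir\Gamma(R)=3$; then \cite[Theorem~3.2]{AnMu-zdgraph} yields $\gir\Gamma(R[X])=3$; finally, since the content map $c:R[X]\to(\FId R,\cdot)$ is Armendariz, Theorem~\ref{thm:Armendariz}(\ref{it:girfin}) transports the $3$-cycle to $\Gamma(\FId R,\cdot)$ and hence to $\AG(R)$. Your ``alternative'' sentence at the end is in fact a streamlining of this: once \cite{AnMu-zdgraph} hands you pairwise-orthogonal nonzero $x,y,z\in R$, the principal ideals $(x),(y),(z)$ already form a triangle in $\AG(R)$, and the detour through $R[X]$ and the content map is unnecessary. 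The paper presumably takes the longer route because it illustrates the paper's central theme.

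Your main argument, by contrast, is entirely self-contained and avoids \cite{AnMu-zdgraph} altogether: you build the triangle directly from the structure of $\Min R$. The finite case via $I_a=\ann(P_a)=\bigcap_{Q\ne P_a}Q$ is clean, and your repair in the infinite case via the Hausdorffness of $\Min R$ (Henriksen--Jerison) is the right idea. One small point worth making explicit: the assertion that $\{Q\in\Min R:I_a\subseteq Q\}=\Min R\setminus U_a$ (and hence $I_a\ne 0$) uses that for any $Q_0\in U_a$ there is a basic open $D(f)\cap\Min R\subseteq U_a$ containing $Q_0$, so $f\in\bigcap_{Q\notin U_a}Q=I_a$ while $f\notin Q_0$. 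With that spelled out, your proof is complete; it trades the black-box citations for a short topological input and is arguably more elementary.
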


\begin{proof}
First, note that the total ring of quotients of $R$ cannot be a product of two fields, or else the kernels of the corresponding maps from $R$ to the fields would be the two unique minimal primes of $R$.  But according to \cite[Theorems 2.2 and 2.4]{AnMu-zdgraph}, the fact that the total ring of quotients is not a product of two fields implies that $\gir \Gamma(R) = 3$.  Then \cite[Theorem 3.2]{AnMu-zdgraph} implies that $\gir \Gamma(R[X]) = 3$.  But since the content map $c: R[X] \ra (\FId R, \cdot)$ is an Armendariz map, it follows from Theorem~\ref{thm:Armendariz} that $\gir \Gamma(\FId R, \cdot) = 3$.  That is, there are distinct finitely-generated ideals $I, J, K$ of $R$ such that $IJ = IK = JK = 0$.  These also create a 3-cycle in $\AG(R)$, so that $\gir \AG(R) = 3$ as well. 
\end{proof}

\section{$T_1$ topological spaces and subset lattices}\label{sec:lattice}
 Recall that a \emph{lattice} is a partially ordered set $(\cL, \leq)$ such that every pair $x, y\in \cL$ of elements has a unique least upper bound (called their \emph{join} $x \vee y$) and a unique greatest lower bound (called their \emph{meet} $x \wedge y$).  We will assume that all lattices are atomic, bounded, and distributive.  \emph{Bounded} means that there is a greatest element $1$ and a least element $0$.  \emph{Distributive} means that each of the meet and join operations follow the distributive law with respect to the other.  \emph{Atomic} means that for any nonzero element $x$, there is an atom $a$ such that $x\geq a$.  An \emph{atom} of a bounded lattice $\cL$ is an element $a\in \cL$ such that when $x\in \cL$ and $x\leq a$, either $x=a$ or $x=0$.

For generalities on lattice theory, see the book \cite{DaPr-latbook}.

Note that if one considers such a lattice $\cL$ (i.e. an atomic, bounded, distributive lattice, or \emph{a.b.d.l.}) along with its \emph{meet} operation $\wedge$, it is a nilpotent-free semigroup, with $0$ as the absorbing element, since for $x \in \cL$, we have $x \wedge x = x$.

Let $X$ be a topological space, and suppose $X$ has the property that every nonempty \emph{closed} subset $A$ of $X$ contains a \emph{minimal} nonempty closed subset $B$ of $X$.  (That is, $B$ is closed, and there are no nonempty closed subsets of $X$ properly contained in $B$.  We call such a $B$ an \emph{atom}.)  This is then an atomic bounded distributive lattice, where meet is intersection, join is union, $X$ is the multiplicative identity, $\emptyset$ is the absorbing element, and the minimal nonempty closed subsets of $X$ are the atoms.  If $X$ is a $T_1$-space (i.e. every point is closed), then it has this property, and the one-point sets are the atoms.  More generally, we call a space \emph{pearled} if every nonempty closed subset of $X$ contains a closed point.  Then the atoms are the closed points.  If $X$ is a pearled space, let $Y := \Prl(X)$ be the set of all closed points of $X$, with the subspace topology.  \emph{Note that $Y$ is then a $T_1$ space.}  We will explore the notion of pearled space in its own right in \S\ref{sec:prl}.

If $S$ is a set, consider a bounded distributive lattice $\cL$ of subsets of $S$ (that is, a sublattice of $2^S$).  We say that $\cL$ is $T_1$ if for all $s\in S$, $\{s\} \in \cL$.

Indeed, for any a.b.d.l. $\cL$, we construct a $T_1$ subset lattice as follows.  Let $Y$ denote the set of atoms of $\cL$.  We give a sublattice $\cM$ of $2^Y$ as follows.  For any element $e \in \cL$, let $\alpha(e)$ denote the set of atoms bounded above by $e$.  That is, $\alpha(e) := \{y \in Y \mid y \leq e\}$.  Then let $\cM := \{ \alpha(e) \mid e \in \cL\}$.  Then $\alpha(0) = \emptyset$, $\alpha(1) = Y$, $\alpha(e \vee f) = \alpha(e) \cup \alpha(f)$, and $\alpha(e \wedge f) = \alpha(e) \cap \alpha(f)$, so that not only is $\cM$ an atomic bounded distributive subset lattice on $Y$, but $\alpha: (\cL, \wedge) \ra (\cM, \cap)$ is a semigroup homomorphism.  Moreover, $\alpha$ is surjective by construction, and kernel-free because of the atomicity condition on $\cL$.  Thus, $\alpha$ is an Armendariz map.

For a topological space $X$, let $\sigma(X)$ denote its lattice of closed subsets.

Let $X$ be a pearled topological space.  Seen as a lattice, then, $\sigma(X)$ is an a.b.d.l. whose atoms are the closed points.  Then the construction $\alpha$ above, applied to $X$, just identifies the subset $Y \hookrightarrow X$ of closed points of $X$ under the subspace topology.  Of course the resulting map $\alpha: \sigma(X) \ra \sigma(Y)$ (given by intersection with $Y$) is a kernel-free surjective lattice homomorphism, and in particular an Armendariz map.  A good example of this is $X=\Spec R$ (since every ideal is contained in a maximal ideal); then $Y=\Max R$, the maximal ideal spectrum of $R$ in the Zariski topology.  See \S\ref{sec:spec}.

For a set $Y$ and a subset lattice $\cL$ of $2^Y$ (e.g. the closed subset lattice of some topology on $Y$), we say that $\cL$ is \emph{irreducible} if for all $A, B \in \cL \setminus \{Y\}$, we have $A \cup B \neq Y$.  We say that $\cL$ is \emph{connected} if for all $A,B \in \cL \setminus \{Y\}$ such that $A \cap B = \emptyset$, we have $A \cup B \neq Y$.  This terminology is inherited from topology and algebraic geometry.

\begin{prop}\label{pr:charirrconn}
Let $Y$ be a set and let $\cL$ be a $T_1$ sublattice of $2^Y$.  Then $\ver{\Gamma(\cL)} = \cL \setminus \{\emptyset, Y\}$.  Moreover: 
\begin{enumerate}
\item\label{it:charirred} $\cL$ is irreducible $\iff$ every pair of vertices of $\Gamma(\cL)$ admits a path of length 2 between them in $\Gamma(\cL)$.
\item\label{it:charconn} $\cL$ is connected $\iff$ every edge of $\Gamma(\cL)$ is part of a 3-cycle.
\end{enumerate}
\end{prop}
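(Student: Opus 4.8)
The plan is to derive everything from a single bridge observation that converts the union condition defining irreducibility and connectedness into the existence of a common neighbor, using crucially that the $T_1$ hypothesis puts every singleton into $\cL$.

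First I would pin down the vertex set. Neither the absorbing element $\emptyset$ nor the identity $Y$ is a vertex: for $Y$ this is because $Y \cap B = B$, so $Y$ annihilates only $\emptyset$ and is not a zero-divisor. Conversely, given $A \in \cL \setminus \{\emptyset, Y\}$, the fact that $A \neq Y$ produces a point $y \in Y \setminus A$, and the $T_1$ property gives $\{y\} \in \cL$ with $A \cap \{y\} = \emptyset$; since $\{y\} \neq \emptyset$, this exhibits $A$ as a nonzero zero-divisor. Hence $\ver{\Gamma(\cL)} = \cL \setminus \{\emptyset, Y\}$. (If $|Y| \leq 1$ then $\cL \subseteq \{\emptyset, Y\}$ and all assertions are vacuous, so I will assume $|Y| \geq 2$ throughout.)

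The heart of the argument I would isolate as a claim: for all $A, B \in \cL$ with $A, B \neq Y$, one has $A \cup B \neq Y$ if and only if there is a vertex $C$ with $A \cap C = B \cap C = \emptyset$. The forward direction takes any $y \notin A \cup B$ and sets $C = \{y\}$, which lies in $\cL$ by $T_1$, is nonempty, and satisfies $\{y\} \neq Y$ because $|Y| \geq 2$; by the vertex computation it is therefore a vertex, clearly disjoint from both $A$ and $B$. The backward direction is immediate: a nonempty $C$ disjoint from both $A$ and $B$ is disjoint from $A \cup B$, so any point of $C$ witnesses $A \cup B \neq Y$.

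Finally I would read off the two equivalences by adjusting the range over which the pair $(A,B)$ varies. For (\ref{it:charirred}), irreducibility asserts $A \cup B \neq Y$ for all $A, B \neq Y$, whose only binding instances are \emph{distinct} vertices (if $A$ or $B$ equals $\emptyset$, or $A = B$, the union is automatically a proper subset); for distinct vertices $A, B$ the claim converts this into the existence of a vertex $C$ disjoint from both, and such a $C$, being disjoint from the nonempty sets $A$ and $B$, is automatically distinct from each, so the edges $\{A,C\}$ and $\{C,B\}$ form precisely a path of length $2$. For (\ref{it:charconn}), connectedness restricts to pairs with $A \cap B = \emptyset$; among nonempty sets these are exactly the edges $\{A,B\}$ of $\Gamma(\cL)$, and the claim supplies a vertex $C$ disjoint from both, i.e. a $C$ completing the edge to a $3$-cycle. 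I expect no genuine obstacle here; the only points needing care are the degenerate cases (guaranteeing the chosen singleton is a bona fide vertex, which is where $|Y| \geq 2$ enters, and confirming that the empty set and coincident pairs contribute nothing to the content of irreducibility and connectedness), together with the observation that (\ref{it:charirred}) and (\ref{it:charconn}) are the \emph{same} statement quantified over all vertex-pairs versus only over the disjoint ones.
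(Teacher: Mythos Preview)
Your proof is correct and follows essentially the same approach as the paper's: both use the $T_1$ property to produce a singleton $\{y\}$ with $y \notin A \cup B$ as the common neighbor, and both observe conversely that any common neighbor $C$ satisfies $C = (A \cup B) \cap C = \emptyset$ when $A \cup B = Y$. Your only difference is organizational---you factor out the equivalence ``$A \cup B \neq Y$ iff $A,B$ have a common neighbor'' as a single claim and then specialize it, whereas the paper runs the four implications separately; this is a tidy refactoring but not a different argument.
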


\begin{proof}
For the first statement, we may first assume that $Y$ itself is nonempty.  It is clear that $\emptyset$ and $Y$ cannot be vertices of $\Gamma(\cL)$, and for any proper nonempty subset $A$ of $Y$ such that $A \in \cL$, there exists $y \in Y \setminus A$, and by the $T_1$ property we have $\{y\} \in \cL$, so that $A \wedge \{y\} = A \cap \{y\} = \emptyset$.  Thus, $A$ is a vertex of $\Gamma(\cL)$.

\begin{proof}[Proof of (\ref{it:charirred})]
Suppose $\cL$ is irreducible.  The conditions are vacuous if the zero-divisor graph has fewer than two vertices.  Accordingly, let $A, B \in \cL \setminus \{\emptyset, Y\}$ be distinct elements.  Then $A \cup B \neq Y$, so there exists $y\in Y$ such that $y \in Y \setminus (A \cup B)$.  But by the $T_1$ condition, $\{y\} \in \cL$, and $A \cap \{y\} = B \cap \{y\} = \emptyset$, so we get the path $A$ --- $\{y\}$ --- $B$.

On the other hand, suppose $Y$ is not irreducible.  Then there exist $A, B \in \cL$ such that $A \cup B = Y$.  If there were a path $A$ --- $C$ --- $B$ of length 2 in $\Gamma(\cL)$, then we would have $A \cap C = B \cap C = \emptyset$, so that $C =Y \cap C =  (A \cup B) \cap C= (A \cap C) \cup (B \cap C) = \emptyset \cap \emptyset = \emptyset$, contradicting the fact that $C$ is a vertex of $\Gamma(\cL)$.
\end{proof}

\begin{proof}[Proof of (\ref{it:charconn})]
Suppose $\cL$ is connected.  We may assume that $\cL$ has at least one edge, so let $A$ --- $B$ be an edge in $\Gamma(\cL)$.  Then $A, B \in \cL \setminus \{\emptyset, Y\}$ with $A \cap B = \emptyset$, so by the connectedness property, $A \cup B \neq Y$.  Letting $y \in Y \setminus (A \cup B)$, we get a 3-cycle in $\Gamma(\cL)$ between $A$, $B$, and $\{y\}$. \[\xymatrix{
& \{y\} \ar@{-}[dl] \ar@{-}[dr]\\
A \ar@{-}[rr] & & B
}\]

On the other hand, suppose $\cL$ is not connected, and let $A$, $B$ form a disconnection of $\cL$.  That is, $A, B \in \cL \setminus \{\emptyset, Y\}$, $A \cap B = \emptyset$, and $A \cup B = Y$.  Then $A$ --- $B$ is an edge in $\Gamma(\cL)$, but if $C$ formed edges with both $A$ and $B$, then we would have $C = \emptyset$ by the same argument as in the irreducible case, a contradiction.
\end{proof}

\end{proof}

\begin{thm}\label{thm:chroclq}
Let $Y$ be a set and let $\cL$ be a $T_1$ sublattice of $2^Y$.  Then $\chro(\Gamma(\cL)) = \clq \Gamma(\cL) = $ the cardinality of $Y$.
\end{thm}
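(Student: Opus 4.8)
The plan is to prove the chain of inequalities
\[
|Y| \le \clq \Gamma(\cL) \le \chro(\Gamma(\cL)) \le |Y|,
\]
which pins all three quantities to $|Y|$. I will assume $|Y| \ge 2$, since when $Y$ has at most one element the graph $\Gamma(\cL)$ is empty and the statement reduces to a degenerate bookkeeping check; the substance is in the case $|Y|\ge 2$.

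The lower bound comes from exhibiting an explicit maximum clique. By Proposition~\ref{pr:charirrconn} the vertex set is $\ver{\Gamma(\cL)} = \cL \setminus \{\emptyset, Y\}$, and two vertices are adjacent precisely when they are disjoint (since the semigroup operation is $\cap$ and $A \cap B = \emptyset$ is exactly the edge condition). By the $T_1$ hypothesis, each singleton $\{y\}$ with $y \in Y$ lies in $\cL$, and because $|Y| \ge 2$ each is a proper nonempty subset, hence a genuine vertex. The family $\{\{y\} \mid y \in Y\}$ is pairwise disjoint, so it is a clique of cardinality $|Y|$, whence $\clq \Gamma(\cL) \ge |Y|$. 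The middle inequality $\clq \Gamma(\cL) \le \chro(\Gamma(\cL))$ is the general fact that any proper coloring must assign pairwise distinct colors to the vertices of a clique, so every admissible color set has cardinality at least that of any clique; this holds for arbitrary (including infinite) cardinals under the sup/inf definitions in force.

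The upper bound is the one genuinely clever step. Since every vertex $A$ is a \emph{nonempty} subset of $Y$, I choose (invoking choice when $Y$ is infinite) a representative element $c(A) \in A$, giving a map $c : \ver{\Gamma(\cL)} \to Y$. If $\{A, B\}$ is an edge, then $A \cap B = \emptyset$, so $c(A) \in A$ and $c(B) \in B$ cannot coincide; thus $c$ is a valid $Y$-coloring and $\chro(\Gamma(\cL)) \le |Y|$. Combining the three displayed inequalities yields $\clq \Gamma(\cL) = \chro(\Gamma(\cL)) = |Y|$.

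The main obstacle here is not analytic depth but care: the decisive insight is that adjacency means disjointness, so a choice of representative is automatically a proper coloring and the singletons form a largest clique. The remaining points are purely organizational — confirming that the singletons are honest vertices (which is where $|Y|\ge 2$ is used), checking that the inequality $\clq \le \chro$ and the coloring argument survive passage to infinite cardinals, and dispatching the trivial small cases. I expect no serious difficulty beyond these.
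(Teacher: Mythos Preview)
Your proof is correct and follows essentially the same route as the paper: both establish $|Y| \leq \clq \Gamma(\cL)$ via the clique of singletons, invoke the general inequality $\clq \leq \chro$, and bound $\chro(\Gamma(\cL)) \leq |Y|$ by choosing a representative element from each vertex as its color. The only difference is cosmetic --- you explicitly flag the case $|Y| \leq 1$ (where, incidentally, the stated equality is vacuous or trivially degenerate depending on conventions for the empty graph), while the paper proceeds directly without singling it out.
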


\begin{proof}
First recall that for any graph $G$, one has $\chro(G) \geq \clq G$.  This is because in order to color a clique of cardinality $\lambda$, one needs $\lambda$ colors, since each vertex has an edge with every other vertex.  So $\chro(\Gamma(\cL)) \geq \clq \Gamma(\cL)$.

Next, we show that $\clq \Gamma(\cL) \geq |Y|$.  Indeed, let $G$ be the induced subgraph of $\Gamma(\cL)$ such that $\ver G = \{\{y\} \mid y \in Y\}$.  All these singletons are vertices of $\Gamma(\cL)$ by the $T_1$ property, and of course $\{y\} \cap \{z\} = \emptyset$ whenever $y\neq z$, so $G$ contains the edge $\{y\}$ --- $\{z\}$ and hence is a $|Y|$-clique in $\Gamma(\cL)$.

Finally, we show that $\chro(\Gamma(\cL)) \leq |Y|$.  We define a set map $c: \ver{\Gamma(\cL)} \ra Y$ as follows: for any $A \in \cL \setminus \{\emptyset, Y\}$, pick an element $x\in A$, and let $c(A) = x$.  (Such a map is a direct application of the axiom of choice.)  Then for any edge $A$ --- $B$ in $\Gamma(\cL)$, we have $A \cap B = \emptyset$, so that since $c(A) \in A$ and $c(B) \in B$, we have $c(A) \neq c(B)$.  Thus, $c$ is a $|Y|$-coloring of $\Gamma(\cL)$.
\end{proof}

\begin{thm}\label{thm:T1}
Let $Y$ be a set and let $\cL$ be a $T_1$ sublattice of $2^Y$. \begin{enumerate}
\item\label{it:finite} If $Y$ is finite, then $\cL$ is the closed subset lattice of the discrete space structure on $Y$.  That is, $\cL = 2^Y$.  (Hence, $\Gamma(\cL)$ is the complement of the incidence graph on the proper nonempty subsets of $Y$.)
\item\label{it:leq1} If $\#(Y) \leq 1$ then $\Gamma(\cL)$ is empty.
\item\label{it:2} If $\#(Y) = 2$, then $\Gamma(\cL)$ consists of one edge with two vertices connecting them.  Hence, \begin{itemize}
 \item $\diam \Gamma(\cL) = 1$,
 \item $\gir \Gamma(\cL) = \infty$.
 \end{itemize}
\item\label{it:irred} If $\#(Y) \geq 3$ and $\cL$ is irreducible, then \begin{itemize}
 \item $Y$ is infinite,
 \item $\diam \Gamma(\cL) = 2$,
 \item $\gir \Gamma(\cL) = 3$.
  \end{itemize}
\item\label{it:notirr} If $\#(Y) \geq 3$ and $\cL$ is not irreducible, then \begin{itemize}
 \item $\diam \Gamma(\cL) = 3$, 
 \item $\gir \Gamma(\cL) = 3$.
 \end{itemize}
\end{enumerate}
\end{thm}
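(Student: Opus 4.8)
The plan is to handle the five parts in increasing order of difficulty, relying on Proposition~\ref{pr:charirrconn} to translate (ir)reducibility into statements about short paths, and on the general bounds $\diam \Gamma(\cL) \leq 3$ and $\gir \Gamma(\cL) \in \{3,4,\infty\}$. Parts (\ref{it:finite}), (\ref{it:leq1}), and (\ref{it:2}) I would settle directly from the lattice structure. Since $\cL$ is closed under join ($=$ union) and contains every singleton by the $T_1$ hypothesis, finiteness of $Y$ makes every subset a finite union of singletons and hence a member of $\cL$; together with $\emptyset \in \cL$ this gives $\cL = 2^Y$, proving (\ref{it:finite}). For (\ref{it:leq1}), the vertex set $\cL \setminus \{\emptyset, Y\}$ (Proposition~\ref{pr:charirrconn}) is empty, so $\Gamma(\cL)$ has no vertices. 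For (\ref{it:2}), with $Y = \{a,b\}$ the only proper nonempty members of $\cL$ are the disjoint singletons $\{a\}$ and $\{b\}$, joined by the unique edge; a two-vertex single-edge graph has diameter $1$ and girth $\infty$.

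For (\ref{it:irred}) I would first establish that $Y$ is infinite by contraposition: if $Y$ were finite then (\ref{it:finite}) forces $\cL = 2^Y$, but once $\#(Y)\geq 2$ any nonempty proper $A$ together with $Y \setminus A$ witnesses non-irreducibility. Granting $\#(Y)\geq 3$, three distinct singletons are pairwise disjoint and form a triangle, so $\gir \Gamma(\cL)=3$. For the diameter, Proposition~\ref{pr:charirrconn}(\ref{it:charirred}) supplies a length-$2$ path between every pair of vertices, giving $\diam \Gamma(\cL) \leq 2$; and the pair $\{y_1\}$, $\{y_1,y_2\}$ (both vertices, the latter proper because $Y$ is infinite) has nonempty intersection, hence is non-adjacent, forcing $\diam \Gamma(\cL)\geq 2$ and so $\diam \Gamma(\cL)=2$.

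The main work is case (\ref{it:notirr}), and the crux is producing two vertices at distance exactly $3$. The triangle of singletons again gives $\gir \Gamma(\cL)=3$, and $\diam \Gamma(\cL)\leq 3$ holds in general, so only $\diam \Gamma(\cL)\geq 3$ remains. Non-irreducibility yields proper nonempty $A,B \in \cL$ with $A \cup B = Y$; note both are automatically nonempty and proper, hence vertices. The computation in the proof of Proposition~\ref{pr:charirrconn}(\ref{it:charirred}) shows that no vertex is adjacent to both, since a common neighbor $C$ would satisfy $C = (A \cup B)\cap C = \emptyset$; thus there is no length-$2$ path between $A$ and $B$. If moreover $A \cap B \neq \emptyset$, then $A$ and $B$ are also non-adjacent, so $d(A,B)=3$.

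The one obstacle is that the non-irreducibility witness $A,B$ might be a \emph{disjoint} partition of $Y$, in which case $A,B$ are adjacent and the preceding argument only bounds the length of a common-neighbor path. To repair this I would use $\#(Y)\geq 3$: since $\#(A)+\#(B)=\#(Y)\geq 3$, one of the sets, say $A$, has $\#(A)\geq 2$; picking $a \in A$ and replacing $B$ by $B' := B \cup \{a\}$, the $T_1$ property puts $B'$ in $\cL$, it is proper because the nonempty set $A \setminus \{a\}$ is disjoint from it, it still satisfies $A \cup B' = Y$, and now $A \cap B' = \{a\} \neq \emptyset$. This reduces the disjoint case to the previous one, yielding a pair at distance $3$ and hence $\diam \Gamma(\cL)=3$.
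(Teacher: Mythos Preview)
Your proof is correct and follows essentially the same route as the paper's. The only organizational difference is in part~(\ref{it:notirr}): the paper splits into cases according to whether $\cL$ is connected or not, whereas you split according to whether the chosen non-irreducibility witness $A,B$ happens to be disjoint; in the disjoint case your replacement $B' := B \cup \{a\}$ is exactly the paper's construction $C := \{x\} \cup B$, and the subsequent distance-$3$ argument is identical.
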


\begin{proof}[Proof of (\ref{it:finite})]
Let $A$ be any nonempty subset of $Y$.  Then $A = \bigcup_{a \in A} \{a\}$.  But by the $T_1$ property $\{a\} \in \cL$, and since the union is finite and $\cL$ is a lattice, it follows that $A \in \cL$.
\end{proof}

\begin{proof}[Proof of (\ref{it:leq1})]
Obvious.
\end{proof}

\begin{proof}[Proof of (\ref{it:2})]
If $a,b$ are the elements of $Y$, then $\cL = \{\emptyset, \{a\}, \{b\}, Y\}$, so that the vertices of $\Gamma(\cL)$ are $\{a\}$ and $\{b\}$, which have an edge between them because $a\neq b$.  The rest is clear.
\end{proof}

\begin{proof}[Proof of (\ref{it:irred})]
The infiniteness follows from part (\ref{it:finite}).  By Proposition~\ref{pr:charirrconn}, any pair of vertices is connected by a path of length 2, so $\diam \Gamma(\cL) \leq 2$.  To see that the diameter equals 2, let $y,z \in Y$ be distinct elements; then $\{y\}, \{y,z\} \in \cL$ but they have nonempty intersection, so that $d(\{y\}, \{y,z\}) \geq 2$, and so $\diam \Gamma(\cL) = 2$.  As for the girth, if $x, y, z$ are distinct elements of $Y$, then the vertices $\{x\}, \{y\}, \{z\}$ of $\Gamma(\cL)$ form a 3-cycle.
\end{proof}

\begin{proof}[Proof of (\ref{it:notirr})]
For the girth statement, if $a,b,c$ are distinct elements of $Y$, then $\{a\}, \{b\}, \{c\} \in \cL$ by the $T_1$ property, and they form a 3-cycle since the sets are pairwise disjoint.

For the diameter statement, we break our analysis into two cases, based on whether $\cL$ is connected or not.

First suppose $\cL$ is connected.  Since $\diam \Gamma(\cL) \leq 3$, to show equality we need only exhibit a pair of vertices whose distance from each other is greater than two.  Let $A$, $B$ be distinct vertices of $\Gamma(\cL)$ such that $A \cup B = Y$.  Since $\cL$ is connected, $A \cap B \neq \emptyset$, so $d(A, B) \geq 2$.  However, given an edge $A$ --- $C$ in $\Gamma(\cL)$, we have $C \cap A  = \emptyset$, from which it follows that $C \subseteq B$, so that $C \cap B \neq \emptyset$.  Thus, there is no path of length 2 from $A$ to $B$, whence $d(A,B) = 3$, so that $\diam \Gamma(\cL) = 3$.

If $\cL$ is not connected, let $A$, $B$ be a disconnection of $\cL$.  That is, $A, B \in \cL \setminus \{\emptyset, Y\}$, $A \cap B = \emptyset$, and $A \cup B = Y$.  Since $Y$ has at least three elements, without loss of generality we have $|A| \geq 2$.  Let $x \in A$.  Then $\{x\} \in \cL$ by the $T_1$ property.  Then by the lattice property, $C := \{x\} \cup B \in \cL$, and since $C \notin \{\emptyset, Y\}$, we have that $C$ is a vertex of $\Gamma(\cL)$.  We claim that $d(A,C) =3$.  We have $A \neq C$ (so that $d(A,C) \neq 0$) and $A \cap C = \{x\} \neq \emptyset$ (so that $d(A,C) \neq 1$).  Now take an arbitrary edge of $\Gamma(\cL)$ that has $A$ as a vertex, say $A$ --- $D$.  Then $A \cap D = \emptyset$, which means that $D \subseteq Y \setminus A \subseteq B \subseteq C$, whence $D \cap C \neq \emptyset$.  This shows that $d(A, C) \neq 2$, so we must have $d(A,C) = 3$.  Thus, $\diam \Gamma(\cL)=3$.
\end{proof}

\section{Pearled spaces}\label{sec:prl}

In \S\ref{sec:lattice}, we introduced the notion of a \emph{pearled} topological space, meaning a space where every nonempty closed subset contains a closed point.  Examples abound (e.g. all $T_1$ spaces and $\Spec R$ for any commutative ring $R$).  Moreover, as remarked earlier in \S\ref{sec:lattice}, when $X$ is such a space, there is then an Armendariz map from the closed subset lattice of $X$ to that of the closed points of $X$ (indeed something of a canonical one, \emph{cf.} \S\ref{sec:eq}), which is why this axiom is useful for our purposes.  However, as the present paper seems to be the first place where this property appears, we felt it to be the proper venue to inquire into its relationship with other, well-known properties, in order to facilitate further investigations. Accordingly, we devote the present section to this notion, comparing it to other separation axiom-type properties a topological space may have.  However, nothing that is done in this section affects the rest of the paper, so the reader may skip it without losing the flow.

\[ \xymatrix{
& & T_0 \ar@<-.7ex>@{=>}[dd]|{\neswline} \\
T_1 \ar@<+.7ex>@{=>}[r] & T_{1/2} \ar@<+.7ex>@2{->}[l]|{/} \ar@{=>}[ur] \ar@{=>}[dr] & & \text{Noetherian (or quasi-compact) and } T_0 \ar@{=>}[ul] \ar@{=>}[dl]\\
& & \text{pearled}  \ar@<-.7ex>@{=>}[uu]|{\neswline}
}\]
Recall: $T_1$ means all singleton subsets are closed.  $T_{1/2}$ (first defined in \cite{Lev-T1/2}; see also \cite{Dun-T1/2}) means that every singleton subset is either open or closed.  $T_0$ means that for any distinct pair $x,y$ of points, $\overline{\{x\}} \neq \overline{\{y\}}$.
 A space is \emph{Noetherian} if every strictly descending chain of closed subsets is of finite length (e.g. any finite space is Noetherian, and the spectrum of a Noetherian ring is Noetherian).

It is obvious that any $T_1$ space is $T_{1/2}$.

To see that $T_{1/2} \implies T_0$ (known, but included here for completeness), let $X$ be a $T_{1/2}$-space, and let $x,y$ be two distinct points such that $x\in \overline{\{y\}}$.  Then in particular $\{y\}$ is not closed, whence it is open, so that $X \setminus \{y\}$ is closed, so that since the latter set contains $x$, it follows that $\overline{\{x\}} \subseteq X \setminus \{y\}$.

To see that $T_{1/2} \implies$ pearled, let $X$ be a $T_{1/2}$-space.  Let $A$ be a nonempty closed subset of $X$, and suppose $A$ contains no closed points.  Then every point of $A$ is open.  So let $x\in A$.  Then $A \setminus \{x\} = \bigcup_{\overset{y \in A}{y\neq x}} \{y\}$ is open, whence $\{x\}$ is closed, contradicting our assumption.

Now suppose $X$ is a Noetherian\footnote{A similar argument allows us to replace the Noetherian condition with the weaker assumption that the space is quasi-compact.  However, one must appeal to transfinite induction.} $T_0$-space.  We claim that every non-singleton closed subset of $X$ contains a proper closed subset.  Let $A$ be a non-singleton closed subset of $X$, and let $x,y \in A$ be distinct points.  Without loss of generality, $y \notin \overline{\{x\}}$, by the $T_0$ property.  Hence $\overline{\{x\}}$ is a proper closed subset of $A$, proving the claim.  Now let $B=B_0$ be a closed subset of $X$, and suppose $B$ does not contain a closed point.  Then by the claim, $B_0$ contains a proper closed subset $B_1$, which in turn does not contain a closed point, so that it contains a proper closed subset $B_2$, and so forth to make an infinite strictly descending chain of closed subsets of $X$.  But this violates the Noetherian condition.

To see that $T_{1/2} \not \implies T_1$ (known, but included here for completeness), use the Sierpi\'nski space (that is, the prime spectrum of a rank 1 discrete valuation ring) as a counterexample.

To see that pearled $\not \implies T_0$, let $X = \{a,b,c\}$, and topologize it by declaring the closed sets to be $\emptyset$, $\{c\}$, and $X$.  This is clearly a pearled topological space, but it is not $T_0$ since $\overline{\{a\}} = \overline{\{b\}}$.

To see that $T_0 \not \implies$ pearled, let $X = \N_0$, the nonnegative integers, and topologize it by declaring the closed sets to be $\emptyset$ and all intervals of the form $[n, \infty)\cap \N_0$ for $n\in \N_0$.  Then $X$ is $T_0$, since if $m<n$ then $m \notin \overline{\{n\}} = [n,\infty)\cap \N_0$, but it is not pearled, because there are no closed points at all.

\section{Prime spectra, maximal spectra, and $\USpec R$}\label{sec:spec}
We let $\Spec R$ denote the prime spectrum of a ring with the Zariski topology.  $\USpec R$ will denote the same set, but topologized differently.
 \begin{defn}
We let $\USpec R$ be the topological space whose elements are the prime ideals of $R$ and whose closed sets are the sets of the form $\bigcup_{\lambda \in \Lambda} V(I_\lambda)$, where $\{I_\lambda \mid \lambda \in \Lambda\}$ is an arbitrary set of ideals of $R$.
\end{defn}

Note that $\USpec R$ is the smallest refinement of $\Spec R$ that makes the closed set lattice complete.  That is, every \emph{union} of closed sets in $\Spec R$ is a closed set in $\USpec R$.  In other words, $\USpec R$ is an \emph{Alexandroff space}, hence the notation. $\Max R$ denotes the set of maximal ideals of $R$ in the Zariski topology.  (That is, we consider it as a subspace of $\Spec R$.)  $\Jac R$ will denote the intersection of all maximal ideals (i.e. the \emph{Jacobson radical}) of $R$.

\begin{rmk}
The reader may wonder why we consider two different topologies on the prime spectrum of a ring.  The reason will appear in \S\ref{sec:tensor}, where among other things, we utilize the concept of \emph{locally Nakayama modules}, whose set of supports is exactly the closed sets in $\USpec R$.
\end{rmk}

The map $(\sigma(\Spec R), \cap) \ra (\sigma(\Max R), \cap)$ that sends $C \mapsto C \cap \Max R$ is a kernel-free surjective semigroup homomorphism, hence an Armendariz map, as we have noted in the introductory paragraphs of \S\ref{sec:lattice}.  On the other hand, the map $(\sigma(\USpec R), \cap) \ra (2^{\Max R}, \cap)$ is likewise a kernel-free surjective homomorphism.  To see this, note that since $\{\m\}$ is closed in $\Spec R$ for any $\m \in \Max R$, it follows that any subset of $2^{\Max R}$ is closed in $\USpec R$; then apply the previous remarks on pearled spaces.

Thus, we get the following theorem:

\begin{thm}\label{thm:specs}
Let $R$ be a commutative ring.  Consider the two zero-divisor graphs $G:=\Gamma(\sigma(\Spec R))$ and $H:= \Gamma(\sigma(\USpec R))$.
\begin{enumerate}
\item\label{it:spchro} $\chro(G) = \chro(H) = \clq G = \clq H = |\Max R|$.
\item\label{it:sploc} If $R$ is local (i.e. has only one maximal ideal), then $G = H = \emptyset$.
\item\label{it:sp2loc} Suppose $R$ has exactly two maximal ideals $\m_1, \m_2$.  \begin{itemize}
 \item If all nonmaximal prime ideals are contained in $\m_1 \cap \m_2$, then $\diam G = \diam H = 1$.  Otherwise $\diam G = \diam H = 2$.
 \item If there exist $\p_1, \p_2 \in \Spec R \setminus \Max R$ such that $\p_i \subseteq \m_i$ for $i=1,2$ but $\p_1 \nsubseteq \m_2$ and $\p_2 \nsubseteq \m_1$, then $\gir G = \gir H = 4$; otherwise $\gir G = \gir H = \infty$.
 \end{itemize}
\item\label{it:spdiamgir} If $|\Max R| \geq 3$, then $\diam H = \gir H = \gir G = 3$.
\item\label{it:spJacp} If $|\Max R| \geq 3$ and $\Jac R$ is prime, then $\diam G = 2$.
\item\label{it:spJacnp} If $|\Max R| \geq 3$ and $\Jac R$ is not prime, then $\diam G = 3$.
\end{enumerate}
\end{thm}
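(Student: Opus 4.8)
The plan is to reduce the computation of $\diam G$ to that of the far more tractable graph $\Gamma(\sigma(\Max R))$ by means of the Armendariz map $\rho \colon (\sigma(\Spec R), \cap) \ra (\sigma(\Max R), \cap)$, $C \mapsto C \cap \Max R$, already identified above as kernel-free and surjective, hence Armendariz. Both semigroups are nilpotent-free, since intersection is idempotent, so Theorem~\ref{thm:Armendariz} applies; in particular, once I show that $\Gamma(\sigma(\Max R))$ has diameter $3$, part~(\ref{it:diam}) of that theorem---which covers all target diameters other than $1$---immediately gives $\diam G = 3$. Thus the whole burden shifts onto the target graph.

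First I would record that $\cL := \sigma(\Max R)$ is a $T_1$ sublattice of $2^{\Max R}$: each maximal ideal is a closed point of $\Spec R$, so $\{\m\} = V(\m) \cap \Max R \in \cL$ for every $\m$. Since $|\Max R| \geq 3$ by hypothesis, Theorem~\ref{thm:T1} applies, and the diameter of $\Gamma(\cL)$ is governed entirely by whether $\cL$ is irreducible: the irreducible case~(\ref{it:irred}) would give diameter $2$, while the non-irreducible case~(\ref{it:notirr}) gives diameter $3$. So everything comes down to the dictionary between irreducibility of $\cL$ and primality of $\Jac R$, which is the conceptual heart of the argument and the step I expect to carry the real (if light) content.

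To set up that dictionary, I would write a general element of $\cL$ as $V(I) \cap \Max R$ and specialize to principal ideals: using the identity $V(a) \cup V(b) = V(ab)$ on prime ideals, for $A = V(a) \cap \Max R$ and $B = V(b) \cap \Max R$ one finds that $A \cup B = \Max R$ holds exactly when $ab \in \Jac R$, whereas $A \neq \Max R$ holds exactly when $a \notin \Jac R$. Hence $\cL$ fails to be irreducible precisely when there exist $a, b$ with $ab \in \Jac R$ but $a, b \notin \Jac R$---that is, precisely when $\Jac R$ is not prime. (Note $\Jac R$ is a proper ideal because $\Max R \neq \emptyset$, so ``not prime'' genuinely refers to failure of the multiplicative criterion rather than to improperness.) Under the hypothesis that $\Jac R$ is not prime, $\cL$ is therefore not irreducible, so Theorem~\ref{thm:T1}(\ref{it:notirr}) yields $\diam \Gamma(\cL) = 3$; feeding this back through $\rho$ via Theorem~\ref{thm:Armendariz}(\ref{it:diam}) completes the proof that $\diam G = 3$. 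The only points requiring care are the routine checks that $\Jac R$ is proper and that principal ideals suffice as witnesses, neither of which poses a genuine obstacle.
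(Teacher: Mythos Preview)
Your proposal addresses only part~(\ref{it:spJacnp}) of the theorem, not the full statement; but for that part it is correct and follows exactly the paper's route: pass along the Armendariz map $\sigma(\Spec R) \to \sigma(\Max R)$, invoke Theorem~\ref{thm:T1}(\ref{it:notirr}) once $\sigma(\Max R)$ is known to be non-irreducible, and pull back via Theorem~\ref{thm:Armendariz}(\ref{it:diam}). The only difference is cosmetic: the paper simply cites the equivalence ``$\Max R$ irreducible $\iff \Jac R$ prime'' as well known, whereas you spell out the forward witness via principal ideals $V(a), V(b)$---which is exactly the direction needed here, so your remark that ``principal ideals suffice as witnesses'' is on point and no further justification of the biconditional is required for part~(\ref{it:spJacnp}).
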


\begin{proof}
First, we identify which  nonempty closed subsets of $\Spec R$ (resp. $\USpec R$) are zero-divisors.  Since the maps are Armendariz, a closed subset $C$ is a zero-divisor in $\Gamma(\sigma(\Spec R))$ (resp. in $\Gamma(\sigma(\USpec R))$) if and only if its image is a zero-divisor in $\Gamma(\sigma(\Max R))$ (resp. in $\Gamma(2^{\Max R})$.  But this holds if and only if the image of $C$ is a proper nonempty subset of $\Max R$  -- that is, $C$ does not contain all of the maximal ideals of $R$. 

Next, we use the fact that the maps $(\sigma(\Spec R), \cap) \ra (\sigma(\Max R), \cap)$ and $(\sigma(\USpec R), \cap) \ra (2^{\Max R}, \cap)$ given above are Armendariz maps, along with Theorems~\ref{thm:Armendariz}, \ref{thm:chroclq}, and \ref{thm:T1}.  The point is that both $(\sigma(\Max R), \cap)$ and $(2^{\Max R}, \cap)$ are $T_1$ subset lattices of $2^{\Max R}$.

Parts~(\ref{it:spchro}) and (\ref{it:sploc}) then follow immediately.  For part~(\ref{it:spdiamgir}), it is enough to note that $(2^Y, \cap)$ is never irreducible if $|Y| \geq 2$, since one may simply partition the set $Y$ into two disjoint nonempty subsets; here we use $Y=\Max R$

 Now, suppose $\Max R = \{\m_1, \m_2\}$ is a doubleton (i.e. the situation of part~(\ref{it:sp2loc})).  We claim first that the corresponding Armendariz map of semigroups is a bijection if and only if all nonmaximal primes are contained in $\m_1 \cap \m_2$.  Let $X := \Spec R$ or $\USpec R$.  If there is some nonmaximal prime $\p \nsubseteq \m_1 \cap \m_2$, then we may assume $\p \subset \m_1$ and $\p \nsubseteq \m_2$.  Then $V(\p) \cap \Max R = \{\m_1\}$, so that $C := V(\p)$ is a zero-divisor, but $C \cap \{\m_1\} \neq \emptyset$, so that $d(C, \{\m_1\}) \neq 1$, whence $\diam \Gamma(\sigma(X)) = 2$.  On the other hand, if all nonmaximal primes are contained in $\m_1 \cap \m_2$, then $\{\m_1\}$, $\{\m_2\}$ are the only zero-divisors of $(\sigma(X), \cap)$, since if $C$ is a nonempty closed subset and some nonmaximal prime $\p \in C$, then since $\p \subseteq \m_1 \cap \m_2$, both maximal ideals are in $C$, so $C$ is a non-zerodivisor.  Therefore, the induced map on zero-divisor graphs is a bijection, and we have $\diam \Gamma(\sigma(X)) = 1$ and $\gir \Gamma(\sigma(X)) = \infty$.

However, if all nonmaximal primes not contained in $\m_1 \cap \m_2$ are in $\m_1$ (resp. in $\m_2$), then similar considerations show that $\Gamma(\sigma(X))$ is a star graph with $\{\m_2\}$ (resp. $\{\m_1\}$) in the middle, so that $\gir \Gamma(\sigma(X)) = \infty$.  But if there exist  $\p_1$, $\p_2$ as in the statement of the theorem, then $\gir \Gamma(X) = 4$ because the following 4-cycle is a subgraph of $\Gamma(\sigma(X))$: \[\xymatrix{
V(\p_1) \ar@{-}[d] \ar@{-}[r] & V(\p_2) \ar@{-}[d] \\
\{\m_2\} \ar@{-}[r] & \{\m_1\}
}\]

Finally, note that $\Max R$ is irreducible as a topological space if and only if $\Jac R$ is prime, as is well-known (see \cite[Theorem 3.10(b)]{Ke-cabook}) and may be easily seen directly.  Then parts~(\ref{it:spJacp}) and (\ref{it:spJacnp}) then follow immediately.
\end{proof}

\section{Application to tensor-product semigroups}\label{sec:tensor}

We recall the definition and give some basic properties of locally Nakayama modules:

\begin{defn}\cite{NasPa-vfzd}
Let $R$ be a commutative ring.  An $R$-module $M$ is said to be \emph{locally Nakayama} if  for all $\p \in \Spec R$, $M_\p \neq 0$ implies that $M_\p \neq \p M_\p$.  The class of locally Nakayama $R$-modules up to isomorphism is denoted by ${}_R\NAK$.
\end{defn}

In other words, $M$ is locally Nakayama iff for all $\p \in \Supp M$, $M \otimes_R \kappa(\p) \neq 0$, where $\kappa(\p) = R_\p / \p R_\p$ is the residue field of $\p$.

\begin{prop}
Let $R$ be a commutative ring.
\begin{enumerate}
\item\label{it:NAKfg} ${}_R\md \subseteq {}_R\NAK$.
\item\label{it:NAKtensor} If $M, N \in {}_R\NAK$, then $M \otimes_R N \in {}_R\NAK$ and $\Supp_R(M \otimes_R N) = \Supp_RM \cap \Supp_RN$.
\item\label{it:NAKext} ${}_R\NAK$ is closed under extensions.
\item\label{it:NAKsum} ${}_R\NAK$ is closed under \emph{arbitrary} direct sums.
\item\label{it:NAKuspec} $\sigma(\USpec R) = \{\Supp_RM \mid M \in {}_R\NAK\}$.
\end{enumerate}
\end{prop}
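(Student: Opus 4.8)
The plan is to work throughout with the residue-field reformulation recorded just after the definition: a module $L$ lies in ${}_R\NAK$ exactly when $\p\in\Supp_R L$ forces $L\otimes_R\kappa(\p)\neq0$ (the converse implication being automatic, since $L_\p=0$ gives $L\otimes_R\kappa(\p)=0$). This turns membership in ${}_R\NAK$ into a pointwise nonvanishing condition over the residue fields, so that each part reduces to a commutation of $-\otimes_R\kappa(\p)$ with the relevant construction. Part (\ref{it:NAKfg}) is then immediate: for $\p\in\Supp_R M$ with $M$ finitely generated, $M_\p$ is a nonzero finitely generated module over the local ring $R_\p$, so Nakayama's lemma gives $M_\p\neq\p M_\p$, that is, $M\otimes_R\kappa(\p)\neq0$.

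For (\ref{it:NAKtensor}) the engine is the identification $(M\otimes_R N)\otimes_R\kappa(\p)\cong(M\otimes_R\kappa(\p))\otimes_{\kappa(\p)}(N\otimes_R\kappa(\p))$, a tensor product of two $\kappa(\p)$-vector spaces, which is nonzero iff both factors are. By the reformulation the factors are nonzero iff $\p\in\Supp_R M$ and $\p\in\Supp_R N$ respectively; since a nonzero $\kappa(\p)$-tensor forces $\p\in\Supp_R(M\otimes_R N)$, this yields $\Supp_R M\cap\Supp_R N\subseteq\Supp_R(M\otimes_R N)$, while the reverse inclusion holds for all modules. Reading the same chain of equivalences in the other direction shows $\p\in\Supp_R(M\otimes_R N)\Rightarrow(M\otimes_R N)\otimes_R\kappa(\p)\neq0$, so that $M\otimes_R N\in{}_R\NAK$ as well.

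For (\ref{it:NAKext}), I would apply $-\otimes_R\kappa(\p)$ to a short exact sequence $0\to M'\to M\to M''\to0$ to obtain a right-exact $M'\otimes_R\kappa(\p)\to M\otimes_R\kappa(\p)\to M''\otimes_R\kappa(\p)\to0$, and split on whether $M''_\p=0$: if not, then $M''\otimes_R\kappa(\p)\neq0$ is a quotient of $M\otimes_R\kappa(\p)$, and if so, then $M_\p\cong M'_\p$ transfers the nonvanishing from $M'$. For (\ref{it:NAKsum}), both localization and $-\otimes_R\kappa(\p)$ commute with arbitrary direct sums, so if $\p\in\Supp_R(\bigoplus_i M_i)$ then some $(M_i)_\p\neq0$, whence $M_i\otimes_R\kappa(\p)\neq0$ survives as a summand of $(\bigoplus_i M_i)\otimes_R\kappa(\p)$.

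Part (\ref{it:NAKuspec}) assembles the rest. For one inclusion, any $R$-module satisfies $\Supp_R M=\bigcup_{x\in M}V(\ann x)$, a union of Zariski-closed sets and hence closed in $\USpec R$; so supports of modules — in particular of locally Nakayama ones — lie in $\sigma(\USpec R)$. Conversely, given $C=\bigcup_\lambda V(I_\lambda)\in\sigma(\USpec R)$, I would set $M:=\bigoplus_\lambda R/I_\lambda$, which lies in ${}_R\NAK$ because each cyclic summand does by (\ref{it:NAKfg}) and the class is closed under direct sums by (\ref{it:NAKsum}), and which satisfies $\Supp_R M=\bigcup_\lambda V(I_\lambda)=C$. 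I expect the one genuinely delicate point to be the reverse support inclusion in (\ref{it:NAKtensor}): for general modules $\Supp_R(M\otimes_R N)$ can be strictly smaller than $\Supp_R M\cap\Supp_R N$, and it is precisely the locally Nakayama hypothesis — via the passage to residue fields, where nonvanishing of a tensor is detected factorwise — that closes this gap.
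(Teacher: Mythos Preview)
Your proof is correct and follows essentially the same approach as the paper: parts (\ref{it:NAKext}) and (\ref{it:NAKsum}) match the paper's arguments almost verbatim, and for (\ref{it:NAKuspec}) you use $\Supp_R M=\bigcup_{x\in M}V(\ann x)$ where the paper instead checks specialization-closedness directly, an immaterial variation. The only notable difference is that the paper cites \cite{NasPa-vfzd} for (\ref{it:NAKfg}) and (\ref{it:NAKtensor}) rather than proving them, whereas you supply the standard arguments (Nakayama's lemma, and base-change of tensor products to the residue field); your proofs of these are correct and are presumably what the cited reference contains.
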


\begin{proof}
Parts (\ref{it:NAKfg}) and (\ref{it:NAKtensor}) are contained within \cite[Theorem 4]{NasPa-vfzd}.

To see (\ref{it:NAKext}), let $0 \ra L \ra M \ra N \ra 0$ be a short exact sequence of $R$-modules such that $L, N \in {}_R\NAK$.  Let $\p \in \Supp M = \Supp L \cup \Supp N$.  If $\p \in \Supp N$, then $N \otimes_R \kappa(\p) \neq 0$, so from right-exactness of tensor, it follows that $M \otimes_R \kappa(\p) \neq 0$.  On the other hand, if $\p \notin \Supp N$, then $N_\p = 0$, so since localization is exact, it follows that $L_\p \cong M_\p$, whence $M_\p / \p M_\p \cong L_\p / \p L_\p \neq 0$.

Note that (\ref{it:NAKsum}) was proved in \cite[Theorem 4]{NasPa-vfzd} for \emph{finite} direct sums.  To see it for arbitrary direct sums, let $\{M_\lambda \mid \lambda \in \Lambda\}$ be a set of locally Nakayama modules and $M := \bigoplus_{\lambda \in \Lambda} M_\lambda$.  If $M_\p \neq 0$, then obviously there is some $\mu \in \Lambda$ with $(M_\mu)_\p \neq 0$, so that $M_\mu \otimes_R \kappa(\p) \neq 0$.  But $M_\mu \otimes_R \kappa(\p)$ is a direct summand of $M \otimes_R \kappa(\p)$, since finite tensor product commutes with arbitrary direct sum, whence $M \otimes_R \kappa(\p) = \bigoplus_{\lambda \in \Lambda} (M_\lambda \otimes_R \kappa(\p))$.  In particular, $M \otimes_R \kappa(\p) \neq 0$.

Finally, we show (\ref{it:NAKuspec}).  Given $C \in \sigma(\USpec R)$, there is a set $\{I_\lambda \mid \lambda \in \Lambda\}$ of ideals of $R$ such that $C = \bigcup_{\lambda \in \Lambda} V(I_\lambda)$.  Let $M := \bigoplus_{\lambda \in \Lambda} (R/I_\lambda)$.  Then $M$ is locally Nakayama by (\ref{it:NAKfg}) and (\ref{it:NAKsum}), and $\Supp M = \bigcup_{\lambda\in \Lambda} \Supp(R/I_\lambda) = C$.

The reverse inclusion is a general fact about supports of $R$-modules.  Namely, let $M$ be \emph{any} $R$-module, let  $\p \in \Supp M$ and $\q \in \Spec R$ such that $\p \subseteq \q$.  Then we have $0 \neq M_\p \cong (M_\q)_\p$, from which it follows that $M_\q \neq 0$, so that $\q \in \Supp M$.  This shows that for any $\p \in \Supp M$, one has $V(\p) \subseteq \Supp M$.  That is, $\Supp M = \bigcup_{\p \in \Supp M} V(\p)$, so that $\Supp M$ is a union of $(\Spec R)$-closed sets, whence $\Supp M \in \sigma(\USpec R)$.
\end{proof}

\begin{thm}
Let $R$ be a commutative ring.  Consider the two zero-divisor graphs $G':=\Gamma({}_R\md, \otimes_R)$ and $H':= \Gamma({}_R\NAK, \otimes_R)$.  
\begin{enumerate}
\item $\chro(G') = \chro(H') = \clq G' = \clq H' = |\Max R|$.
\item If $R$ is local (i.e. has only one maximal ideal), then $G' = H' = \emptyset$.
\item Suppose $R$ has exactly two maximal ideals $\m_1, \m_2$.  Then \begin{itemize}
 \item $\diam G' = \diam H' = 2$ and
 \item $\gir G' = \gir H' = 4$.
 \end{itemize}
\item If $|\Max R| \geq 3$, then $\diam H' = \gir H' = \gir G' = 3$.
\item If $|\Max R| \geq 3$ and $\Jac R$ is prime, then $\diam G' = 2$.
\item If $|\Max R| \geq 3$ and $\Jac R$ is not prime, then $\diam G' = 3$.
\end{enumerate}
\end{thm}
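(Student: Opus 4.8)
The plan is to realize both $G'$ and $H'$ as the source graphs of Armendariz maps whose targets are \emph{exactly} the graphs $G=\Gamma(\sigma(\Spec R))$ and $H=\Gamma(\sigma(\USpec R))$ already analyzed in Theorem~\ref{thm:specs}, and then to read off five of the six conclusions mechanically from Theorem~\ref{thm:Armendariz}, treating the two-maximal-ideal case by hand. First I would check that $\Supp\colon ({}_R\md, \otimes_R) \ra (\sigma(\Spec R), \cap)$ is an Armendariz map: it is surjective because $\Supp(R/I)=V(I)$ realizes every Zariski-closed set, it is kernel-free because $\Supp M=\emptyset$ iff $M=0$, and it is a semigroup homomorphism by the tensor-support identity of the preceding Proposition (part~(\ref{it:NAKtensor}), applied via ${}_R\md \subseteq {}_R\NAK$ from part~(\ref{it:NAKfg}), noting that tensor products of finitely generated modules are finitely generated). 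The identical argument, now using parts~(\ref{it:NAKtensor}) and~(\ref{it:NAKuspec}), shows $\Supp\colon ({}_R\NAK,\otimes_R)\ra(\sigma(\USpec R),\cap)$ is Armendariz.

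For the clique and chromatic numbers (part~(1)) I would invoke parts~(\ref{it:clq}) and~(\ref{it:chro}) of Theorem~\ref{thm:Armendariz}, which preserve these invariants exactly; combined with Theorem~\ref{thm:specs}(\ref{it:spchro}) this gives $\clq=\chro=|\Max R|$. The local case (part~(2)) follows because emptiness of the target forces emptiness of the source (the $\diam=0$ clause in the proof of Theorem~\ref{thm:Armendariz}(\ref{it:diam})), using Theorem~\ref{thm:specs}(\ref{it:sploc}). For $|\Max R|\geq 3$ (parts~(4),(5),(6)), Theorem~\ref{thm:specs}(\ref{it:spdiamgir}),(\ref{it:spJacp}),(\ref{it:spJacnp}) says the targets have diameter in $\{2,3\}$ and girth $3$; since these diameters are $\neq 1$ and these girths are $<\infty$, parts~(\ref{it:diam}) and~(\ref{it:girfin}) of Theorem~\ref{thm:Armendariz} transport them verbatim to $G'$ and $H'$.

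The only case needing genuinely new input is part~(3), where $\Max R=\{\m_1,\m_2\}$ and the conclusions for the module graphs ($\diam=2$ and $\gir=4$ \emph{always}) are strictly stronger than those for the support graphs, which by Theorem~\ref{thm:specs}(\ref{it:sp2loc}) may have diameter $1$ and girth $\infty$. The decisive observation is that the induced map on zero-divisor graphs is never bijective here: $R/\m_1$ and $(R/\m_1)^{\oplus 2}$ are non-isomorphic vertices of $G'$ (and of $H'$) with the common support $\{\m_1\}$, and both are honest zero-divisors since tensoring either with $R/\m_2$ gives $0$. For the diameter I would argue by cases on the target: if it has diameter $2$, then Theorem~\ref{thm:Armendariz}(\ref{it:diam}) yields $\diam=2$; if it has diameter $1$, the non-bijectivity just exhibited forces $\diam=2$ via Theorem~\ref{thm:Armendariz}(\ref{it:diam1}); either way $\diam G'=\diam H'=2$. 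For the girth I would display the explicit $4$-cycle
\[
R/\m_1 \;\text{---}\; R/\m_2 \;\text{---}\; (R/\m_1)^{\oplus 2} \;\text{---}\; (R/\m_2)^{\oplus 2} \;\text{---}\; R/\m_1,
\]
whose edges hold because consecutive supports are the disjoint singletons $\{\m_1\},\{\m_2\}$; since Theorem~\ref{thm:specs}(\ref{it:sp2loc}) guarantees the target girth is $\geq 4$, parts~(\ref{it:girfin})/(\ref{it:girinf}) of Theorem~\ref{thm:Armendariz} rule out any $3$-cycle in $G'$, so the girth is exactly $4$. All four modules lie in ${}_R\md\subseteq{}_R\NAK$, so the same cycle and the same diameter argument apply to $H'$.

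I expect the main obstacle to be precisely part~(3): the temptation is to transport Theorem~\ref{thm:specs} blindly, whereas the point is that the extra vertices arising from non-isomorphic modules of equal support, which collapse under $\Supp$, are exactly what raise the diameter from $1$ to $2$ and the girth from $\infty$ to $4$. The cleanest packaging is to observe that this is literally the ``expectation'' scenario of the remark following Theorem~\ref{thm:Armendariz}: the fibers of $\Supp$ over $\{\m_1\}$ and $\{\m_2\}$ are non-singletons, which is the first listed mechanism forcing girth $4$.
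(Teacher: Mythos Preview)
Your proposal is correct and follows essentially the same approach as the paper: both establish that $\Supp$ gives Armendariz maps onto $(\sigma(\Spec R),\cap)$ and $(\sigma(\USpec R),\cap)$, invoke Theorems~\ref{thm:Armendariz} and~\ref{thm:specs} for parts (1), (2), (4)--(6), and handle part~(3) by observing that $R/\m_i$ and $(R/\m_i)^{\oplus 2}$ are distinct vertices with common image $\{\m_i\}$. Your version is more explicit where the paper is terse---you spell out the case split on the target diameter and exhibit the $4$-cycle directly, whereas the paper simply records the non-singleton fibers and leaves the reader to invoke the ``expectation'' mechanism from the proof of Theorem~\ref{thm:Armendariz}(\ref{it:girinf})---but the underlying argument is the same.
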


\begin{proof}
After noting the kernel-free surjective homomorphisms (hence Armendariz maps) $({}_R\md, \otimes_R) \arrow{\Supp} (\sigma(\Spec R), \cap)$ and $({}_R\NAK, \otimes_R) \arrow{\Supp} (\sigma(\USpec R), \cap)$, most of the above follows from Theorems~\ref{thm:Armendariz} and \ref{thm:specs}.  All that remains is what happens when $R$ has exactly two maximal ideals $\m_1, \m_2$.  In this case, we note simply that the finitely generated (hence locally Nakayama) modules $R/\m_i$ and $(R/\m_i) \oplus (R/\m_i)$ both map to the set $\{\m_i\}$, respectively for each $i=1,2$.
\end{proof}

\begin{rmk}
At this point, it is natural to ask whether this framework can be extended to the semigroup of \emph{all} $R$-modules $({}_R\Mod, \otimes_R)$ (up to isomorphism, of course).  The problem is that the support map, while surjective (onto $\sigma(\USpec R)$) and kernel-free, is no longer a homomorphism, nor even an Armendariz map.  Indeed, one cannot go very far outside the class of locally Nakayama modules before problems occur.  To see this, let $R$ be any ring of positive Krull dimension, let $\p \subset \q$ be  distinct prime ideals, let $L := \kappa(\p)  = R_\p / \p R_\p$ and $M := R/\q$.  Then  $\Supp_R L = V(\p)$ and $\Supp_R(M) = V(\q)$, so that  $\Supp_R L \cap \Supp_R M = V(\q) \neq \emptyset$, but $L \otimes_R M \cong \kappa(\p) / \q \kappa(\p) = 0$, so $\Supp_R (L \otimes_R M) = \emptyset$.

Thus, it was natural to bring in locally Nakayama modules as a large (conjecturally maximal) semigroup of $R$-modules on which the support map is Armendariz.
\end{rmk}

\section{Application to the comaximal ideal graph}\label{sec:comax}

In this section, we consider the semigroup $\Id(R)$ of ideals of $R$, but under \emph{addition}.  This is also a ``semigroup with zero''; its absorbing element is the unit ideal $R$.  There is an isomorphism between $(\Id(R), +)$ and the semigroup of cyclic modules up to isomorphism along with tensor product over $R$, given by $I \mapsto R/I$.  This is related to the so-called ``co-maximal graphs'' introduced (without a name) by \cite{BhSh-comaxg}\footnote{Here and in the bibliography, we adhere to the convention that the authors of a mathematical paper are listed in alphabetical order.}, and especially its ``retract'' $\Gamma_r(R)$ studied in \cite{LWYY-comaxg}, which embeds as an induced subgraph of the graph $G''$ considered in the following theorem.

\begin{thm}
Let $R$ be a commutative ring.  Consider the graph $G'' = \Gamma(\Id(R), +)$. \begin{enumerate}
\item $\chro(G'') = \clq{G''} = |\Max R|$.
\item If $R$ is local (i.e. has only one maximal ideal), then $G''= \emptyset$.
\item Suppose $R$ has exactly two maximal ideals $\m_1, \m_2$.  \begin{itemize}
 \item If all nonmaximal ideals are contained in $\m_1 \cap \m_2$, then $\diam G''  = 1$.  Otherwise $\diam G''  = 2$.
 \item If there exist $J_1, J_2 \in \Id(R) \setminus \Max R$ such that $J_i \subseteq \m_i$ for $i=1,2$ but $J_1 \nsubseteq \m_2$ and $J_2 \nsubseteq \m_1$, then $\gir G'' = 4$; otherwise $\gir G'' = \infty$.
 \end{itemize}
\item If $|\Max R| \geq 3$, then $\gir {G''} = 3$.
\item If $|\Max R| \geq 3$ and $\Jac R$ is prime, then $\diam {G''} = 2$.
\item If $|\Max R| \geq 3$ and $\Jac R$ is not prime, then $\diam {G''} = 3$.
\end{enumerate}
\end{thm}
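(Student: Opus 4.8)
The plan is to reduce the entire theorem to the $T_1$ subset-lattice machinery of Section~\ref{sec:lattice}, in exact parallel with the proof of Theorem~\ref{thm:specs}, by producing a single Armendariz map out of $(\Id(R),+)$. Define
\[
\psi \colon (\Id(R), +) \longrightarrow (\sigma(\Max R), \cap), \qquad \psi(I) = \{\m \in \Max R \mid I \subseteq \m\}.
\]
Since $\psi(I+J) = \psi(I) \cap \psi(J)$, since every closed subset of $\Max R$ has the form $\psi(I)$ for some $I$ (so $\psi$ is surjective), and since $\psi(I) = \emptyset$ precisely when $I$ lies in no maximal ideal, i.e.\ when $I = R$ (the absorbing element of $(\Id(R),+)$), the map $\psi$ is a kernel-free surjective semigroup homomorphism, hence an Armendariz map. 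Moreover $(\sigma(\Max R), \cap)$ is a $T_1$ sublattice of $2^{\Max R}$ with underlying set $Y = \Max R$, and by the Armendariz property an ideal $I$ is a vertex of $G''$ exactly when $\psi(I)$ is a vertex of $\Gamma(\sigma(\Max R))$, i.e.\ when $I$ is proper and $I \nsubseteq \Jac R$.

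First I would dispatch parts (1), (2), and (4) by pure transfer. For (1), Theorem~\ref{thm:Armendariz}(\ref{it:clq}),(\ref{it:chro}) give $\clq G'' = \clq \Gamma(\sigma(\Max R))$ and $\chro(G'') = \chro(\Gamma(\sigma(\Max R)))$, and by Theorem~\ref{thm:chroclq} both target values equal $|\Max R|$. For (2), if $R$ is local then $Y$ is a singleton, so $\Gamma(\sigma(\Max R))$ is empty by Theorem~\ref{thm:T1}(\ref{it:leq1}); as in the proof of Theorem~\ref{thm:Armendariz}(\ref{it:diam}) an empty target graph forces $G'' = \emptyset$. For (4), when $|\Max R| \geq 3$ Theorem~\ref{thm:T1}(\ref{it:irred}),(\ref{it:notirr}) gives $\gir \Gamma(\sigma(\Max R)) = 3$, whence $\gir G'' = 3$ by Theorem~\ref{thm:Armendariz}(\ref{it:girfin}).

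Parts (5) and (6) are next. For $|\Max R| \geq 3$ the diameter of $\Gamma(\sigma(\Max R))$ is $2$ or $3$, never $1$, by Theorem~\ref{thm:T1}(\ref{it:irred}),(\ref{it:notirr}); hence Theorem~\ref{thm:Armendariz}(\ref{it:diam}) yields $\diam G'' = \diam \Gamma(\sigma(\Max R))$. Exactly as in Theorem~\ref{thm:specs}, $\sigma(\Max R)$ is irreducible iff $\Max R$ is irreducible as a space iff $\Jac R$ is prime; so the diameter is $2$ when $\Jac R$ is prime and $3$ when it is not, giving (5) and (6).

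The genuinely delicate case is (3), where $|\Max R| = 2$: here the target $\Gamma(\sigma(\Max R))$ is a single edge (Theorem~\ref{thm:T1}(\ref{it:2})) with diameter $1$ and girth $\infty$, and for such a target Theorem~\ref{thm:Armendariz} does \emph{not} pin down the diameter or girth of $G''$ — these depend on the fibers of $\psi$. I would therefore describe $G''$ explicitly. Write $L_1$ for the vertices $I$ of $G''$ with $I \subseteq \m_1$ and $L_2$ for those with $I \subseteq \m_2$; since a vertex is a proper ideal with $I \nsubseteq \Jac R = \m_1 \cap \m_2$, these sets partition $\ver{G''}$, and $\m_i \in L_i$, so each is nonempty. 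Two ideals on the same side are never comaximal (a common maximal ideal contains both), while any $I \in L_1$ and $J \in L_2$ satisfy $I + J = R$ because neither $\m_1$ nor $\m_2$ contains both; thus $G''$ is the complete bipartite graph on $L_1 \sqcup L_2$. Reading invariants off this graph finishes the proof: the diameter is $1$ exactly when $|L_1| = |L_2| = 1$, which says every nonmaximal ideal lies in $\m_1 \cap \m_2$, and is $2$ otherwise; and the girth is $4$ exactly when $|L_1| \geq 2$ and $|L_2| \geq 2$, i.e.\ when there exist nonmaximal ideals $J_1 \subseteq \m_1$ with $J_1 \nsubseteq \m_2$ and $J_2 \subseteq \m_2$ with $J_2 \nsubseteq \m_1$, and is $\infty$ otherwise. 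I expect this complete-bipartite identification — and in particular the bookkeeping translating ``nontrivial fibers of $\psi$'' into the stated ideal-theoretic conditions — to be the only step requiring real care; everything else is a direct invocation of Theorems~\ref{thm:Armendariz}, \ref{thm:chroclq}, and \ref{thm:T1}, in complete parallel with Theorem~\ref{thm:specs}.
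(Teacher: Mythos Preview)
Your proposal is correct and follows essentially the same approach as the paper: the paper's proof is just two sentences, pointing out that $V(-)\colon (\Id R,+) \to (\sigma(\Spec R),\cap)$ is a kernel-free surjective homomorphism and then declaring the argument ``essentially identical'' to the $G$-parts of Theorem~\ref{thm:specs}. Your only deviations are cosmetic: you compose one step further and map directly to $(\sigma(\Max R),\cap)$ rather than stopping at $\sigma(\Spec R)$, and in the two-maximal-ideal case you package the analysis as an explicit complete-bipartite identification of $G''$ rather than re-tracing the case analysis from the proof of Theorem~\ref{thm:specs}(\ref{it:sp2loc}); both yield the same conclusions by the same underlying mechanism.
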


\begin{proof}
Note that the map $(\Id R, +) \arrow{V(-)} (\sigma(\Spec R), \cap)$, which sends an ideal to the set of prime ideals containing it, is a kernel-free surjective homomorphism, hence an Armendariz map.  Then the proof is essentially identical to the parts of the proof of Theorem~\ref{thm:specs} concerning the graph $G$ of that theorem.
\end{proof}

\newcommand{\etalchar}[1]{$^{#1}$}
\providecommand{\bysame}{\leavevmode\hbox to3em{\hrulefill}\thinspace}
\providecommand{\MR}{\relax\ifhmode\unskip\space\fi MR }
\providecommand{\MRhref}[2]{%
  \href{http://www.ams.org/mathscinet-getitem?mr=#1}{#2}
}
\providecommand{\href}[2]{#2}

\end{document}